\newtheorem{Theorem}{Theorem}[section]
\newtheorem{Definition}[Theorem]{Definition}
\theoremstyle{definition}
\newtheorem{Remark}[Theorem]{Remark}
\newcommand{\lbp}{\ell}
\newcommand{\xn}{x_n}
\newcommand{\xnn}{x_{n+1}}
\newcommand{\yn}{y_n}
\newcommand{\ynn}{y_{n+1}}
\newcommand{\intx}[3]{\int_{#1}^{#2} #3\, \mathrm{d}x}
\newcommand{\as}[1]{\renewcommand{\arraystretch}{#1}}
\renewcommand\arraystretch{1.15}
\begin{document}


\title{A new way of deriving implicit Runge-Kutta methods\\ based on repeated integrals}

\author{Hana Mizerov\' a\thanks{Corresponding author: hana.mizerova(at)fmph.uniba.sk. 
} 
		\and Katar\' ina Tvrd\' a $^{**}$
}
\date{}
\maketitle
\medskip

\centerline{$^*$ Department of Mathematical Analysis and Numerical Mathematics, Comenius University}
\centerline{Mlynsk\' a dolina, 842 48 Bratislava, Slovakia}

\bigskip
\centerline{$^{**}$ Faculty of Civil Engineering, Slovak University of Technology in Bratislava}
\centerline{Radlinsk\' eho 11, 810 05 Bratislava, Slovakia}

\abstract{Runge-Kutta methods have  an irreplaceable position among numerical methods designed to solve ordinary differential equations. Especially, implicit ones are suitable for approximating solutions of stiff initial value problems. We propose a new way of deriving  coefficients of implicit Runge-Kutta methods. This approach based on repeated integrals yields both new and well-known  Butcher's tableaux. We discuss the properties of newly derived methods and compare them with standard collocation implicit Runge-Kutta methods in a series of numerical experiments, including the Prothero-Robinson problem.

\medskip
 {\bf MSC classification:}  65L04, 65L05, 65L06

\medskip

{\bf Keywords:} implicit Runge-Kutta method, Cauchy's repeated integration, modified Newton-Cotes quadratures, stiff  initial value problem 
\bigskip
\maketitle

\section{Introduction}\label{S-I}

Let us consider an initial value problem (IVP) of the form
\begin{subequations}\label{ivp}
\begin{align}
y'(x)&=f(x,y(x)), \  x\in [a,b],\label{ivp_eq}\\
y(a)&=y_a\label{ivp_ic}
\end{align} 
\end{subequations}
with $a<b,$ and $f:[a,b]\times \mathbb R \rightarrow \mathbb R,$  $y_a \in\mathbb R$ given.  The independent variable $x$ not always denotes time, but it is usually referred to as a time variable. Quantity $y$ represents the dependent variable.  Knowing the value $y_a$ of a quantity $y$ at time $x=a$ we want to predict what happens to $y$ as time evolves. Initial value problem is an essential tool in scientific computations and mathematical modelling. Therefore it is of a great interest to design efficient and reliable numerical schemes to approximate the exact solutions with a desired precision, especially in  the case of  stiff systems which are pertinent for many real-world problems described by differential equations.

Stiff problems form an important class of ordinary differential equations (ODEs). They appear in  many physical and chemical processes, for instance in fluid dynamics, chemical reaction kinetics, medicine, plasticity, neutron kinetics, porous media, gas transmission networks, transient magnetodynamics, in the study of spring and damping system,   and others, see \cite{nasa, gupta} and the references therein.
 It is difficult to define {\it stiffness}, but it is even more challenging to solve stiff problems numerically - standard numerical methods are unstable, requiring unexpectedly small discrete step in regions of smooth solution curve.

The famous Euler method published in 1768-1770 is based on a simple idea of a moving particle given by a differential equation: in a short period of time in which the velocity has not changed significantly, the change
in position will be approximately equal to the change in time multiplied by the initial velocity.
It was at the end of $19^{\rm th}$ and the beginning of $20^{\rm th}$ century when Runge, Heun and Kutta generalized the Euler method by allowing several evaluations of the derivative in one computational step. See, e.g, \cite{butcher_book} and the references therein. 
Nowadays Runge-Kutta methods (RK) represent an important  class of well-established numerical  methods for solving ODEs. The explicit ones with small stability regions turned out to be unsuitable for stiff problems - discrete step size has to meet stability rather than accuracy requirements \cite{BuFa}. However, {\it implicit  Runge-Kutta methods} with large stability regions are frequently used to solve stiff problems despite being more demanding from the computational point of view.

As already mentioned, stiff ODEs exhibit distressing behaviour when solved by classical numerical methods. Since explicit RK applied to stiff equations are usually unstable, and implementation of fully implicit RK is costly, there has been a growing interest in designing implicit methods with reduced computational costs. For instance, the so-called DIRK (diagonal implicit Runge-Kutta) and SDIRK (singly diagonal implicit Runge-Kutta) methods, see, e.g., \cite{nasa,FeSp} and the references therein. It is impossible to provide an exhausting literature overview on the design of new implicit Runge-Kutta(-type) methods. Let us mention, for instance, new optimized implicit-explicit Runge-Kutta methods \cite{imex}, new collocation methods based on weighted integrals  \cite{col_weight}, a two step fifth order RK for differential-algebraic equations \cite{skvortsov}, usage of interval analysis tools to compute coefficients of Runge-Kutta methods \cite{interval}, and  implicit seven stage tenth order Runge-Kutta methods based on Gauss-Kronrod-Lobatto quadrature formula \cite{7-10}.

In the present paper we provide a new way of deriving coefficients of implicit Runge-Kutta methods. Our approach yields both new and known schemes depending on the quadrature formula we choose.  In particular, we derive a stiffly accurate implicit four stage Runge-Kutta method with explicit first line of order 4. In addition, its experimental order of convergence in the case of linear IVPs is 6.   To the best of our knowledge there is no  result in the literature for computing  coefficients of Runge-Kutta methods based on repeated integration.

The paper is organized as follows:  we provide preliminary material in Section 2. Section 3 contains the main result - new strategy of designing implicit Runge-Kutta methods. Several numerical experiments are presented in Section 4. Conclusion is followed by Appendix containing the list of newly derived and some well-known implicit Runge-Kutta methods.

\section{Preliminaries}\label{S-M}

We provide a necessary mathematical apparatus, notation and known results used in the paper.

\subsection{A general Runge-Kutta method}\label{SS-RK}

Let $y(x)$ be the exact solution of \eqref{ivp}. Let $N\geq 1.$ We seek approximations $\yn$ of exact values $y(\xn)$ at points 
\begin{align*}
x_n=a+nh,\ h=\frac{b-a}{N},\ n=0,\ldots,N.
\end{align*}
Here $h$ denotes the {\em discrete step size}.
The  initial condition \eqref{ivp_ic} yields $y_0=y(x_0)=y(a)=y_a.$

A general Runge-Kutta method is a single step method  employing  $s$ stages $K_i$ in one step. To determine value $\ynn$ from known value $\yn$ we compute 
\begin{equation}\label{gIRK}
\begin{aligned}
\ynn &=\yn + h\sum_{i=1}^s b_iK_i \\
K_i&=f\left(\xn+h c_i, \yn +h\sum_{j=1}^s a_{ij}K_j\right), \ i=1,\ldots, s.
\end{aligned}
\end{equation}
The choice of coefficients $a_{i,j},$ $b_i$ and $c_i$  defines the method itself. 
Every Runge-Kutta method can be  represented by a practical {\it Butcher's tableau} \cite{butcher_history}, see Table~\ref{T-BT_gRK}.
\begin{table}[!ht]
\caption{Butcher's tableau of a general Runge-Kutta method}\label{T-BT_gRK}
\centering
$\begin{array}[b]{r|l}
\mathbf{c} & \mathbf{A}\\
\cline{1-2}
& \mathbf{b^T}
\end{array}
\quad 
\begin{array}{c}
=\\
\\
\end{array}
\quad
\begin{array}[b]{r|ccc}
c_1 & a_{1,1}& \ldots & a_{1,s}\\
. & . & \ldots & . \\
c_s & a_{s,1} & \ldots & a_{s,s} \\
\cline{1-4}
& b_1 & \ldots & b_s\\
\end{array}
$
\end{table}

Obviously, the structure of the matrix $\mathbf A$ decides whether the corresponding Runge-Kutta method is explicit (lower triangular matrix with zeros on the diagonal),  DIRK (lower triangular matrix), SDIRK (lower triangular matrix with the same diagonal elements) or implicit (full matrix).
Typical examples are the explicit  and implicit Euler  methods,  Heun's method - explicit trapezoidal rule, implicit trapezoidal rule, and the prominent explicit fourth-order Runge–Kutta method (RK4). See Table~\ref{T-BT_ex}.
\begin{table}[!ht]
\caption{Examples of well-known Runge-Kutta methods: explicit Euler, implicit Euler, Heun's method, implicit trapezoidal rule, RK4 (left to right)}\label{T-BT_ex}
\centering 
$\begin{array}[b]{c|c}
0 & 0\\
\hline
& 1
\end{array}$
\qquad
$\begin{array}[b]{c|c}
1 &1\\
\hline
& 1
\end{array}$
\qquad
$\begin{array}[b]{c|cc}
0&0&0\\
1&1& 0 \\
\hline 
& \frac 1 2 & \frac 1 2
\end{array}$
\qquad
$\begin{array}[b]{c|cc}
0&0&0\\
1&\frac 1 2 & \frac 1 2 \\
\hline 
& \frac 1 2 & \frac 1 2
\end{array}$
\qquad
$\begin{array}[b]{c|cccc}
0 & 0& 0& 0&0\\
\frac 1 2& \frac 1 2  &0 & 0 & 0 \\
\frac 1 2& 0& \frac 1 2 & 0& 0\\
1 & 0&0&1&0\\
\hline
& \frac 1 6 & \frac 1 3 & \frac 1 3 & \frac 1 6\\
\end{array}
$
\end{table}

\subsubsection{A-stability}

The concept of A-stability introduced by Dahlquist, originally used for multistep methods, suggests to   solve IVP 
\begin{equation}\label{IVP_A}
\begin{aligned}
y'(x)&=\lambda y(x)\\
y(0)&=1
\end{aligned}
\end{equation} 
with $\lambda \in \mathbb C,$ ${\rm Re}(\lambda)<0$ numerically. 
 Desirably our numerical solution has the same behaviour as the exact solution  $y(x)=\mathrm{e}^{\lambda x}$, namely
\begin{align}\label{stab_cond}
\yn \rightarrow 0 \ \mbox{ as } \ n \rightarrow \infty.
\end{align}
Applying a general RK on \eqref{IVP_A} we get, after one computational step, $\ynn=R(h\lambda)\yn,$ and by induction, $$\ynn=R(h\lambda)^n y_0.$$ Thus the stability condition \eqref{stab_cond} is equivalent to $|R(h\lambda)|<1.$

The function $R(z)$ is called the {\it stability function} and the set $S=\{z\in\mathbb C: |R(z)|\leq 1\}$ is called the {\it stability domain. }
If $S \supset \mathbb C^-$ (left half of complex plain), we say the method is {\it A-stable.}
The stability function of a general RK   reads 
\begin{align}
R(z)=\frac{{\rm det}(\mathbf I - z\mathbf A +z\mathbf e \mathbf b^T)}{{\rm det}(\mathbf I - z\mathbf A)},
\end{align}
where $\mathbf I,$ $\mathbf e$ are the identity matrix and the vector of ones, respectively, see \cite{butcher_book,HaWa}.

\subsubsection{Order conditions}

The {\it order} of a Runge-Kutta method is $p$ if and only if the local truncation error is $\mathcal{O}(h^{p+1}).$ There is no explicit Runge-Kutta method of order $p$ with $s=p$ stages for $p>4.$
The highest possible order of an implicit method with $s$ stages is $p=2s$ and is only attained by Gauss-Legendre methods. See \cite{butcher_bar} for more details on the so-called {\it Butcher's barriers}.

One of the great results of Butcher's theory for Runge-Kutta methods are the necessary and sufficient conditions to derive a new method of order $p$. These {\it order conditions} or {\it Butcher's rules} were firstly presented  and connected with the rooted tree theory by Butcher, see \cite{butcher_oc,butcher_oc2}. These conditions yield a system of equations with the unknowns $a_{i,j},$ $b_i$ and $c_i$. 
The number of constraints for each order increases exponentially and is different for one- and high-dimensional problems. Table~\ref{T-oc} contains 17 order conditions \cite{dor} to be satisfied by a Runge-Kutta method of order $p=5$. Orders $6,$ $7$ and $8$ impose 37, 85 and 200 constraints, respectively.
\begin{table}[!ht]
\centering
\caption{Order conditions for Runge-Kutta methods up to order $p=5$}\label{T-oc}
\as{1.0}
\begin{tabular}{llllll}
\toprule
 $p$ & \multicolumn{2}{l}{order conditions} & & \\
\midrule
1 &  $\displaystyle \sum_{i=1}^s b_i = 1$ & & \\
2 &  $\displaystyle \sum_{i=1}^s b_i c_i = \frac 1 2$& & \\
3 &  $\displaystyle \sum_{i=1}^s b_i c_i^2 = \frac 1 3,$& $\displaystyle  \sum_{i,j=1}^s b_i a_{i,j}c_j = \frac 1 6$ & \\
4 &  $\displaystyle \sum_{i=1}^s b_i c_i^3 = \frac 1 4,$& $\displaystyle  \sum_{i,j=1}^s b_ic_i a_{i,j}c_j = \frac 1 8,$& $\displaystyle  \sum_{i,j=1}^s b_i a_{i,j}c_j^2 = \frac{1}{12},$& $\displaystyle  \sum_{i,j,k=1}^s b_i a_{i,j}a_{j,k}c_k = \frac{1}{24}$\\
5 &  $\displaystyle \sum_{i=1}^s b_i c_i^4=\frac 1 5, $& $\displaystyle  \sum_{i,j=1}^s b_ic_i^2 a_{i,j}c_j = \frac{1}{10}, $& $\displaystyle \sum_{i,j=1}^s b_ic_i a_{i,j}c_j^2 = \frac{1}{15}, $& $\displaystyle  \sum_{i,j=1}^s b_i a_{ij}c_j^3 = \frac{1}{20}$ \\
&& $\displaystyle \sum_{i,j,k=1}^s b_i  a_{i,j}c_ja_{i,k}c_k = \frac{1}{20},$& $\displaystyle  \sum_{i,j,k=1}^s b_i c_i a_{i,j}a_{j,k}c_k = \frac{1}{30},$& $\displaystyle    \sum_{i,j,k=1}^s b_i a_{i,j} c_j a_{j,k}c_k = \frac{1}{40}, $ \\
& & &$\displaystyle  \sum_{i,j,k=1}^s b_i a_{ij}a_{j,k}c_k^2 = \frac{1}{60},$& $\displaystyle \sum_{i,j,k,m=1}^s b_i a_{i,j}a_{j,k}a_{k,m}c_m = \frac{1}{120}$ \\
\bottomrule
\end{tabular}
\end{table}
For linear ODEs the number of order conditions can be reduced, see \cite{ZiChi}. For instance, to obtain a sixth order method  it is enough to satisfy  16 conditions collected in Table~\ref{T-oc-lin}.
\begin{table}[!ht]
\centering
\caption{Order conditions for Runge-Kutta methods up to order $p=6$ for linear ODE's}\label{T-oc-lin}
\as{1.0}
\begin{tabular}{llllll}
\toprule
 $p$ & \multicolumn{2}{l}{order conditions} & & \\
\midrule
1 &  $\displaystyle \sum_{i=1}^s b_i = 1$ & & \\
2 &  $\displaystyle \sum_{i=1}^s b_i c_i = \frac 1 2$& & \\
3 &  $\displaystyle \sum_{i=1}^s b_i c_i^2 = \frac 1 3,$& $\displaystyle  \sum_{i,j=1}^s b_i a_{i,j}c_j = \frac 1 6$ & \\
4 &  $\displaystyle \sum_{i=1}^s b_i c_i^3 = \frac 1 4,$&  $\displaystyle  \sum_{i,j=1}^s b_i a_{i,j}c_j^2 = \frac{1}{12},$& $\displaystyle  \sum_{i,j,k=1}^s b_i a_{i,j}a_{j,k}c_k = \frac{1}{24}$\\
5 &  $\displaystyle \sum_{i=1}^s b_i c_i^4=\frac 1 5, $&$\displaystyle  \sum_{i,j=1}^s b_i a_{ij}c_j^3 = \frac{1}{20}$  &$\displaystyle  \sum_{i,j,k=1}^s b_i a_{ij}a_{j,k}c_k^2 = \frac{1}{60},$& $\displaystyle \sum_{i,j,k,m=1}^s b_i a_{i,j}a_{j,k}a_{k,m}c_m = \frac{1}{120}$ \\
6 & $\displaystyle \sum_{i=1}^s b_i c_i^5=\frac 1 6$  &$\displaystyle \sum_{i,j=1}^s b_i a_{ij}c_j^4 = \frac{1}{30}$ & $\displaystyle \sum_{i,j,k=1}^s b_i a_{ij}a_{j,k}c_k^3 = \frac{1}{120}$  &$  \displaystyle\sum_{i,j,k,m=1}^s b_i a_{i,j}a_{j,k}a_{k,m}c_m^2 = \frac{1}{360}$\\& & &  &$ \displaystyle\sum_{i,j,k,m,n=1}^s b_i a_{i,j}a_{j,k}a_{k,m}a_{m,n}c_n = \frac{1}{720}$\\
\bottomrule
\end{tabular}
\end{table}
A Runge-Kutta method is of order $p$ if and only if all order conditions up to order $p$ are satisfied.  

There are further constraints to be imposed in order to obtain a desired structure of the method, for instance:
\begin{itemize}
  \itemsep0em
\item[-] explicit: $a_{i,j}=0,$ $\forall\ j\geq 1$
\item[-] explicit first line: $a_{1,1}=\ldots=a_{1,s}=0$
\item[-] diagonal implicit: $a_{i,j}=0,$ $\forall j> 1$
\item[-] singly diagonal: $a_{1,1}=\ldots=a_{s,s}$
\item[-] stiffly accurate: $a_{s,i}=b_i,$ $\forall i=1, \ldots, s$
\item[-] fully  implicit: $a_{i,j}\neq0, $ $\forall i,j=1, \ldots, s.$
\end{itemize}  
Note that an additional consistency constraint (row-sum condition), $ c_i=\sum_{j=1}^s a_{i,j},$  is typically taken into account, but it is not necessary, see, e.g., \cite{Zl}.
We refer the interested reader to \cite{butcher_oc,butcher_oc2,HaNoWa} for more details.

\subsection{Standard implicit Runge-Kutta methods}

In what follows we give a brief note on the derivation of standard implicit Runge-Kutta methods (sIRK) which shall be compared to our newly derived methods in terms of order and accuracy. 

\subsubsection{Collocation methods}\label{SSS-sc}
Let us integrate the differential equation  \eqref{ivp_eq} over the interval $[\xn,x]$ to get 
\begin{align*}
y(x)-y(\xn)=\intx{\xn}{x}{f(x,y(x))}.
\end{align*}
We approximate the exact value $y(\xn)$ by $\yn$ and replace the integrand $f(x,y(x))$ by its unique Lagrange interpolation polynomial of degree at most $s-1$ corresponding to $s$ points $z_{n,i}=\xn+\tau_i h,$ $0\leq \tau_1<\ldots \tau_i < \ldots<\tau_{s}\leq 1,$ i.e.
\begin{align*}
y(x)\approx \yn +\intx{\xn}{x}{L_{s-1}(x)}=\yn+\sum_{i=1}^{s}f(z_{n,i},y(z_{n,i}))\intx{\xn}{x}{\lbp_i(x)},
\end{align*}
where
\begin{align*}
L_{s-1}(x)=\sum_{i=1}^{s} f(z_{n,i},y(z_{n,i}))\lbp_i(x), \quad \lbp_i(x)=\prod_{i\neq j=1}^{s} \left(\frac{x-z_{n,j}}{z_{n,i}-z_{n,j}}\right).
\end{align*}
The principle of collocation method requires the above identity to hold at any point $y(z_{n,i}).$ Thus, the approximations $y_{n,i}\approx y(z_{n,i})$ can be computed by a nonlinear system of equations
\begin{align*}
y_{n,i}=\yn+\sum_{j=1}^{s}f(z_{n,j},y(z_{n,j}))\intx{\xn}{z_{n,i}}{\lbp_j(x)}, \quad i=1,\ldots,s.
\end{align*}
In case $\tau_s=1$ we set $\ynn=y_{n,s},$ otherwise we set 
\begin{align*}
\ynn=\yn+\sum_{j=1}^{s}f(z_{n,j},y(z_{n,j}))\intx{\xn}{\xnn}{\lbp_j(x)}.
\end{align*}
Denoting $K_i=f(z_{n,i},y(z_{n,i})),$  $c_i=\tau_i,$ $i=1,\ldots, s,$ and
\begin{align*}
b_i&=\intx{\xn}{z_{n,s}}{ \lbp_i(x)},  \quad
a_{i,j}=\intx{\xn}{z_{n,i}}{\lbp_j(x)}   
\end{align*}
we get a standard implicit Runge-Kutta method  with $s$ stages. It shall be referred to as {\bf sIRK$s$}. For reader's convenience we list the corresponding Butcher's tableaux  for $s=2,3,4,5$ in Appendix, see Table~\ref{T-sIRK}.

\begin{Remark}\label{R-bi}
Choosing equally spaced points $z_{n,i}=\xn+h\frac{i-1}{s-1},$ $i=1,\ldots,s,$ $h=\xnn-\xn,$ means that coefficients $b_i$ are computed using a (closed) Newton-Cotes quadrature formula on the interval $[\xn,\xnn].$ 
\end{Remark}

\subsubsection{General implicit Runge-Kutta methods}\label{SSS-girk}

Every collocation method is a Runge-Kutta method but not every Runge-Kutta method is a collocation method. A general IRK can be derived using  the so-called {\it simplifying order conditions} introduced by Butcher \cite{butcher_soc}. In particular, the derivation of the coefficients $b_i$ and $a_{i,j}$ relies on three of them:
\begin{equation}\label{sor}
\begin{aligned}
B(p)&=\sum_{i=1}^s b_i c_i^{k-1}=\frac 1 k, \ \mbox{ for } \ k=1, \ldots, p\\
C(q)&=\sum_{j=1}^s a_{i,j} c_j^{k-1}=\frac{c_i^{k}}{k}, \ \mbox{ for } \ k=1, \ldots, q, \ i=1\ldots,s\\
D(r)&=\sum_{i=1}^s  b_i c_i^{k-1}a_{i,j}=\frac{b_j}{k}(1-{c_j^{k}}), \ \mbox{ for } \ k=1, \ldots, r, \ j=1\ldots,s.
\end{aligned}
\end{equation}
The coefficients $c_i$ are chosen according to the quadrature formula as specified below. The following theorem was proved in \cite{butcher_soc}.
\begin{Theorem}\label{theo-soc}
If a Runge-Kutta method satisfies $B(p),$ $C(q)$ and $D(r)$ with $p\leq q+r+1$ and $p\leq 2q+2,$ then its order of convergence is $p.$ 
\end{Theorem}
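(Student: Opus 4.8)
The plan is to reduce the statement to Butcher's rooted-tree order conditions and then to show that, under the two inequalities, the simplifying assumptions force all of those conditions up to order $p$ to hold. Recall the tree characterisation underlying the conditions collected in Table~\ref{T-oc}: a Runge-Kutta method has order $p$ if and only if $\Phi(t)=1/\gamma(t)$ for every rooted tree $t$ with at most $p$ vertices, where the elementary weight $\Phi(t)$ is assembled from the $b_i$, $a_{i,j}$ and $c_i$ by the usual recursion over $t$ and $\gamma(t)$ is its density. So it suffices to verify this finite family of scalar identities.

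First I would read off the three simplifying assumptions as statements about particular tree shapes. The condition $B(p)$ is precisely the order condition $\sum_i b_ic_i^{k-1}=1/k$ for the bushy tree consisting of a root carrying $k-1$ leaves, valid for $k\le p$; it is what closes off a reduction once a tree has been collapsed into powers of $c$ summed against $\mathbf b$. The condition $C(q)$, namely $\sum_j a_{i,j}c_j^{k-1}=c_i^k/k$, says that applying $\mathbf A$ to a power of $c$ reproduces the integrated power up to degree $q$; in tree terms it lets me collapse a branch of order $k\le q$ hanging below an interior vertex into leaves attached directly to that vertex, without altering the target value $1/\gamma$. Dually, $D(r)$, namely $\sum_i b_ic_i^{k-1}a_{i,j}=\frac{b_j}{k}(1-c_j^k)$, performs the analogous absorption of the edge incident to the root, for depths $k\le r$. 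Each of these three moves rewrites the weight of a tree in terms of weights of strictly smaller trees whose order conditions are already established, so no previously verified identity is spoiled.

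The heart of the proof is an induction on the number of vertices showing that every tree with $\rho(t)\le p$ can be completely dismantled by repeated use of these three moves, and this is exactly where the inequalities enter. The budget $p\le q+r+1$ governs the long chains, where one peels up to $r$ edges from the root via $D(r)$, reduces the remaining branch with $C(q)$, and terminates with a single $B$ step, the three ranges together covering the whole path. The second inequality $p\le 2q+2$ governs trees whose reduction cannot exploit $D(r)$ and must instead be carried out by $C(q)$ on two branches meeting at an interior vertex: these contribute $2q$ levels, while the root and its incident edge account for the remaining two. Treating these cases shows that every order condition up to $p$ collapses to an identity already known to hold, which is the assertion.

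I expect the main difficulty to be this combinatorial induction rather than the algebra of the individual reductions. The algebraic content of a single $C$-, $D$- or $B$-move is a routine rearrangement of the defining sums, but one must prove that for an arbitrary tree of order at most $p$ a legal move always exists, that the height restrictions $k\le q$ and $k\le r$ are never violated, that the procedure terminates, and that the two extremal shapes — the single long chain (governed by $q+r+1$) and the symmetric two-branch tree (governed by $2q+2$) — are reducible under precisely the stated bounds. Pinning down this bookkeeping is the crux; everything else is bookkeeping-free manipulation.
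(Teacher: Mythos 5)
The paper itself does not prove Theorem~\ref{theo-soc}; it is quoted from Butcher's paper \cite{butcher_soc}, so your attempt can only be measured against the classical argument, whose outline (rooted-tree conditions plus $B$/$C$/$D$ reduction moves) you correctly reproduce. The genuine gap is that the combinatorial induction --- the part you yourself call the crux --- is not carried out, and the way you set it up would fail. You propose to induct ``on the number of vertices'', asserting that each move rewrites the weight of a tree ``in terms of weights of strictly smaller trees''. Neither the $C$- nor the $D$-move does this. The $C$-move replaces a bushy branch of order $k$ by $k$ leaves attached to its parent (exactly as you describe it), so the total vertex count is unchanged; only the number of internal vertices (equivalently, the height) drops. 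The $D$-move is worse: if the root of $t$ carries $k-1$ leaves and one subtree $u$, then $D(r)$ gives $\Phi(t)=\frac{1}{k}\bigl(\Phi(u)-\Phi(\bar u)\bigr)$, where $\bar u$ is $u$ with $k$ additional leaves attached to its root, and $\bar u$ has exactly as many vertices as $t$. A correct induction must therefore run on the number of internal vertices (or on height), not on the vertex count, and it must also record that $u$ and $\bar u$ still have order at most $p$ so that the bushy trees eventually produced are covered by $B(p)$.

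Second, the place where the two inequalities act is not the one you describe, and without the correct lemma the induction does not close. Your ``symmetric two-branch tree'' below an interior vertex is in fact harmless: both branches collapse into that vertex by $C$-moves, and a single $D$-move of depth $k=1$ then finishes, so its reduction never invokes $p\le 2q+2$. What that inequality really does is exclude irreducible trees: if no $C$-move applies, every non-root internal vertex whose children are all leaves has at least $q$ of them, hence every internal branch has order at least $q+1$; a vertex carrying \emph{two} internal branches would force $\rho(t)\ge 2(q+1)+1=2q+3>p$. Thus $p\le 2q+2$ shows that every $C$-irreducible tree of order at most $p$ is a caterpillar (each vertex has at most one internal child), and for a non-bushy such tree the root carries $k-1$ leaves with $k=\rho(t)-\rho(u)\le p-(q+1)\le r$ by $p\le q+r+1$, which is precisely what legalizes the $D$-move there. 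This uniform argument covering \emph{all} trees of order at most $p$ --- not just the two ``extremal shapes'' you single out --- is the actual content of Butcher's proof, and it is absent from your proposal.
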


There are three groups of general IRK stemming from the choice of quadrature formula:
\begin{itemize}
\item[i)] {\it Gauss-Legendre methods} which are collocation methods based on the roots of the Legendre polynomial $P^*_s(x)$ shifted to $[0,1].$ The coefficients $b_i$ and $a_{i,j}$ are obtained from $B(s)$ and $C(s),$ respectively. It can be shown that they are of maximal order  $2s,$ where $s$ is the number of stages. 
\item[ii)] {\it Radau methods} are divided to two subgroups: Radau IA based on the Radau left quadrature formula: $c_1=0$ and remaining $c_i$ are the roots of $P_s^*(x)+P_{s-1}^*(x),$  and Radau IIA  based on the Radau right quadrature formula:  $c_s=1$ and remaining $c_i$ are the roots of $P_s^*(x)-P_{s-1}^*(x)$. Moreover, Radau IA impose conditions $B(s)$ and $D(s),$ while Radau IIA require conditions $B(s)$ and $C(s)$ to be satisfied. The well-known example of Radau IIA for $s=1$ is the {\it implicit Euler method}. The Radau methods are not collocation methods and are all of order  $2s-1.$
\item[iii)] {\it Lobatto methods,} also called {\it Lobatto III}, are based on the choice $c_1=0$, $c_s=1$ and remaining $c_i$ being the roots of the polynomial $P_s^*(x)-P_{s-2}^*(x).$ Nowadays there are three established subgroups of Lobatto methods based on the choice of the coefficients $a_{i,j}:$\newline Lobatto IIIA impose condition $C(s)$ and for $s=2$ yield the {\it implicit trapezoidal method}, Lobatto IIIB impose $D(s),$ and Lobatto IIIC impose conditions $a_{i,1}=b_i$ for $i=1,\ldots, s$ and $C(s-1).$  They are all of order $2s-2.$
\end{itemize}
We refer the reader to \cite{butcher_book,butcher_radau} for more details.  In Appendix we give Butcher's tableaux for selected Gauss-Legendre, Radau and Lobatto methods, see Table~\ref{T-RL}.

The simplifying order conditions  \eqref{sor} can be also used to define the {\em stage order}, cf. \cite{HaWa,rang,skvortsovA}. 
\begin{Definition}\label{def-so}
The stage order of a Runge-Kutta method is the maximal integer $\tilde{q}$ such that $B(p)$ and $C(q)$ hold for $p=1,\ldots,\tilde{q}$ and $q=1,\ldots,\tilde{q}.$
\end{Definition}

\subsection{Repeated integrals}\label{SS-ri}

We aim to derive coefficients for RK in order to approximate the  solution to IVP \eqref{ivp}.  Our new strategy  requires the moments of $y'(x)$ and $f(x,y(x))$ to be equal. It means that besides the standard integral identity typical for methods based on numerical integration, 
\begin{align}
\intx{\xn}{\xnn}{y'(x)} &= \intx{\xn}{\xnn}{f(x,y(x))} \label{first_int}
\end{align}
we want the following identities to be satisfied:
\begin{equation}
\begin{aligned}\label{moments}
\intx{\xn}{\xnn}{(\xnn-x)y'(x)} &= \intx{\xn}{\xnn}{(\xnn-x)f(x,y(x))} \\
\ldots&\ldots\ldots\\
\intx{\xn}{\xnn}{(\xnn-x)^{\gamma-1} y'(x)} &= \intx{\xn}{\xnn}{(\xnn-x)^{\gamma-1} f(x,y(x))} \\
\end{aligned}
\end{equation}
for arbitrary $\gamma\geq 1$ to be chosen later. As we shall see, it is useful to rewrite these moments equations as repeated integrals. This is possible thanks to 
Cauchy's formula for repeated integration \cite{cauchy} which allows us to compress $\gamma$ antiderivatives of one function to a single integral. See \cite{Fo} for the proof.
\begin{Theorem}[Cauchy's repeated integration formula]
Let $g$ be a continuous function on the real line. Then the $\gamma^{\rm th}$ repeated integral of $g,$ denoted by $g^{(-\gamma)},$ is given by a single integral,
\begin{align}
g^{(-\gamma)}:=\int_{a}^{b}{\int_{a}^{x_1}\ldots\int_a^{x_{\gamma-1}} g(x_\gamma)\, \mathrm{d}x_{\gamma}\ldots\mathrm{d}x_{1}} = \frac{1}{(\gamma-1)!}\int_a^b (b-t)^{\gamma-1}g(t) \,\mathrm{d}t. \label{CF}
\end{align}
\end{Theorem}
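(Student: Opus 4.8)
The plan is to prove the identity by induction on the number of iterations $\gamma$, with Fubini's theorem (interchange of the order of integration) doing the essential work at each step. Continuity of $g$ on $[a,b]$ guarantees that every integrand appearing below is continuous, hence bounded, on the relevant compact domain, so all the integrals exist and every interchange of integration order is legitimate. For the base case $\gamma=1$ there is nothing to iterate: both sides reduce to $\int_a^b g(t)\,\mathrm{d}t$, since $(b-t)^{0}=1$ and $0!=1$.

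For the inductive step I would assume the formula holds at level $\gamma-1$ for an arbitrary upper endpoint; that is, introducing
$$G(s):=\int_{a}^{s}\int_{a}^{x_1}\cdots\int_a^{x_{\gamma-2}} g(x_{\gamma-1})\,\mathrm{d}x_{\gamma-1}\cdots\mathrm{d}x_{1}=\frac{1}{(\gamma-2)!}\int_a^s (s-t)^{\gamma-2}g(t)\,\mathrm{d}t,$$
the $\gamma$-fold integral in the statement is exactly $\int_a^b G(s)\,\mathrm{d}s$. Substituting the induction hypothesis and exchanging the order of integration over the triangle $\{(s,t):a\le t\le s\le b\}$ converts the iterated $\mathrm{d}t\,\mathrm{d}s$ integral into
$$\frac{1}{(\gamma-2)!}\int_a^b g(t)\left(\int_t^b (s-t)^{\gamma-2}\,\mathrm{d}s\right)\mathrm{d}t.$$
The inner integral evaluates to $(b-t)^{\gamma-1}/(\gamma-1)$, and absorbing the factor $1/(\gamma-1)$ into the factorial yields precisely $\frac{1}{(\gamma-1)!}\int_a^b (b-t)^{\gamma-1}g(t)\,\mathrm{d}t$, which closes the induction.

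The only genuine subtlety — the step I would treat most carefully — is the interchange of the integration order, and in particular the bookkeeping of the limits when passing from the nested domain $a\le t\le s$ (inherited from the induction hypothesis) to the swapped domain $t\le s\le b$. Everything else is a one-line antiderivative computation. An alternative route is to differentiate the right-hand side $\gamma-1$ times with respect to $b$ via the Leibniz rule and match the result to the repeated integral, but that approach must handle the upper limit appearing simultaneously in the bound of integration and in the integrand, so I expect the induction-plus-Fubini argument to be the cleaner of the two.
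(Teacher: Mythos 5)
Your proof is correct. Note, however, that the paper does not prove this theorem at all: it states the formula and defers entirely to the reference \cite{Fo} (Folland, \emph{Advanced Calculus}), so there is no in-paper argument to compare against. Your induction-plus-Fubini argument is the standard self-contained proof and is sound as written: the base case $\gamma=1$ is immediate, the induction hypothesis is correctly stated with a \emph{variable} upper endpoint $s$ (which is essential, since the outermost integration needs the level-$(\gamma-1)$ formula at every $s\in[a,b]$), the interchange of order of integration over the triangle $\{(s,t):a\le t\le s\le b\}$ is justified by continuity of the integrand on a compact set, and the inner antiderivative computation
\begin{align*}
\int_t^b (s-t)^{\gamma-2}\,\mathrm{d}s=\frac{(b-t)^{\gamma-1}}{\gamma-1}
\end{align*}
closes the induction, including the edge case $\gamma=2$ where $(\gamma-2)!=1$ and the hypothesis reduces to the fundamental theorem of calculus form $G(s)=\int_a^s g(t)\,\mathrm{d}t$. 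The alternative you mention, differentiating the right-hand side repeatedly in $b$ via the Leibniz rule and invoking uniqueness of the antiderivative vanishing at $a$, is the other classical route (and closer to what one finds in some textbook treatments); your judgment that the Fubini version is cleaner is reasonable, since the Leibniz route requires tracking that the boundary terms vanish at each differentiation because $(b-t)^{k}$ vanishes at $t=b$ for $k\geq 1$.
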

 From now on, to avoid confusion with the discretization points $x_n,$ $n=0,\ldots,N,$ we use $x$ for each integration variable on the left-hand side of \eqref{CF}.

By the Newton-Leibniz formula (the fundamental theorem of calculus) we have
\begin{align}\label{NL}
y(x)-y(\xn)=\intx{\xn}{x}{y'(x)}.
\end{align}
Due to \eqref{CF} identities \eqref{moments} for $\gamma\geq 2$ become 
\begin{align}\label{momentsC}
\intx{\xn}{\xnn}{\underbrace{\intx{\xn}{x}{\ldots\intx{\xn}{x}{y'(x)}}}_{(\gamma-1)-\mbox{times}}}&=\intx{\xn}{\xnn}{\underbrace{\intx{\xn}{x}{\ldots\intx{\xn}{x}{f(x,y(x))}}}_{(\gamma-1)-\mbox{times}}}.
\end{align}
In view of \eqref{NL} the integral on the left-hand side is nothing but 
\begin{align*}
\intx{\xn}{\xnn}{\underbrace{\intx{\xn}{x}{\ldots\intx{\xn}{x}{y'(x)}}}_{(\gamma-1)-\mbox{times}}}&=
\intx{\xn}{\xnn}{\underbrace{\intx{\xn}{x}{\ldots\intx{\xn}{x}{y(x)}}}_{(\gamma-2)-\mbox{times}}}-y(\xn)\intx{\xn}{\xnn}{\underbrace{\intx{\xn}{x}{\ldots\intx{\xn}{x}{1}}}_{(\gamma-2)-\mbox{times}}}\\
&=\intx{\xn}{\xnn}{\underbrace{\intx{\xn}{x}{\ldots\intx{\xn}{x}{y(x)}}}_{(\gamma-2)-\mbox{times}}}-y(\xn)\frac{h^{\gamma-1}}{(\gamma-1)!},\ h=\xnn-\xn.
\end{align*}  
Combining the above with \eqref{momentsC} we get 
\begin{equation}\label{nr}
\begin{aligned}
 h y(\xn)+\intx{\xn}{\xnn}{\intx{\xn}{x}{f(x,y(x))}}&=\intx{\xn}{\xnn}{y(x)}\\
 \frac{h^2}{2}y(\xn)+\intx{\xn}{\xnn}{\intx{\xn}{x}{\intx{\xn}{x}{f(x,y(x))}}}&=\intx{\xn}{\xnn}{\intx{\xn}{x}{y(x)}} \\
&\ldots \\
\frac{h^S}{S!}y(\xn)+\intx{\xn}{\xnn}{\underbrace{\intx{\xn}{x}{\ldots\intx{\xn}{x}{f(x,y(x))}}}_{S-\mbox{times}}}&=\intx{\xn}{\xnn}{\underbrace{\intx{\xn}{x}{\ldots\intx{\xn}{x}{y(x)}}}_{(S-1)-\mbox{times}}}
\end{aligned}
\end{equation}
 for $S=\gamma-1\geq 1.$ 

\subsubsection{Numerical quadrature for repeated integrals}

Another step towards deriving coefficients of new implicit RK is a suitable numerical approximation of repeated integrals in \eqref{nr}. We refer to  \cite{MT,NT}, where the detailed derivations of such quadrature formulas were presented. Here we briefly recall the results. 

\medskip
\noindent\textbf{Gauss quadrature for  repeated integral.}
Approximation of a repeated integral $g^{(-S)}$ by means of a general Gauss quadrature formula of order $2m-1$  reads
\begin{align}
g^{(-S)} \approx \frac{(b-a)^S}{S!}\sum_{j=1}^m w_{S,j} g(t_j),\label{mGQ}
\end{align} 
where  
\begin{align}
t_j=\frac{(b-a)(x_j+1)+2a}{2}, \qquad w_{S,j}=\frac{S w_j (1-x_j)^{S-1}}{2^S} \label{nwmGQ}
\end{align}
are the nodal Gaussian points shifted from $[-1,1]$ to $[a,b],$ and the weights depending on the number of repeated integrals $S$, respectively. 
 Here $x_j \in [-1,1]$ are the original nodal points and $w_j$ the corresponding weights of Gauss quadrature formula on $[-1,1].$ The reader might have noticed that \eqref{mGQ} with \eqref{nwmGQ} stems from Cauchy's formula \eqref{CF}. See \cite{MT} for more details.

\medskip
\noindent\textbf{Newton-Cotes quadrature for  repeated integral.} 
 Another possibility to approximate $g^{(-S)}$ is by the so-called modified Newton-Cotes formulas introduced in \cite{NT}. In this case, we firstly approximate function $g$ by its unique Lagrange interpolation  polynomial $L_{m-1}(x)$ corresponding to equidistant nodal points and then compute the weights directly, i.e. without applying Cauchy's formula \eqref{CF}.
 
  For the case of {\em closed Newton-Cotes formula} let $H=\frac{b-a}{m-1}$ and $ T_j=a+(j-1)H,\ j=1,\ldots,m.$ Then we write 
\begin{align*}
 g(x)\approx L_{m-1}(x)=\sum_{j=1}^m g(T_j)\ell_j(x), \quad \ell_j(x)=\prod_{j\neq i =1}^m \left(\frac{x-T_i}{T_j-T_i}\right),
\end{align*} 
where $\ell_j(x)$ are the basis Lagrange interpolation polynomials.
Consequently, for the repeated integral we obtain
\begin{subequations}\label{mNC}
 \begin{align}\label{mNCc}
g^{(-S)} \approx \sum_{j=1}^m g(T_j)\int_{a}^{b}\underbrace{{\int_{a}^{x}\ldots\int_a^{x}} \ell_j(x)\, \mathrm{d}x\ldots\mathrm{d}x }_{(S-1)-\mbox{times}}= H^S \sum_{j=1}^m W_{S,j} g(T_j),\quad H^S W_{S,j}=\ell_j^{(-S)}.
\end{align} 
An analogous formula holds for {\em open Newton-Cotes formulas} which do not include the endpoints $a$ and $b.$ Indeed, 
 \begin{align}\label{mNCo}
g^{(-S)} \approx H^S \sum_{j=1}^{m-2} W_{S,j} g(T_j),\quad H^S W_{S,j}=\ell_j^{(-S)}, \quad T_j=a+jH,\ j=1,\ldots,m-2
\end{align} 
\end{subequations}
with the interval $[a,b]$ being divided into $m-1$ subintervals of equal length.

\section{New families of implicit Runge-Kutta methods}\label{S-new}

Our motivation is to solve  initial value problem \eqref{ivp} by an implicit Runge-Kutta method (IRK). We propose a new way of deriving the entries of $\mathbf{A},$ $\mathbf{b}$ and $\mathbf{c},$ cf. Table~\ref{T-BT_gRK}. Instead of focusing on simplifying order conditions \eqref{sor} we employ the moment approach and repeated integrals as described in Subsection~\ref{SS-ri} to determine the coefficients $a_{i,j},$ $b_i$ and $c_i.$ 

\medskip
\noindent
We proceed in four steps:
\begin{itemize}
\itemsep0em
\setlength{\itemindent}{.2in}
\item[]{\bf(Step 1)}  choose a numerical quadrature  and determine $c_i$ accordingly
\item[]{\bf(Step 2)}  compute $b_i$ by the chosen quadrature formula 
\item[]{\bf(Step 3)}  for the number of unknown stages write the corresponding number of identities \eqref{nr}  and apply chosen quadrature formula on repeated integrals
\item[]{\bf(Step 4)}  solve the resulting  system of algebraic equations for unknowns $y_{n,i}$ and get $a_{i,j}$  
\end{itemize} 
We shall follow these steps and detail the procedure for three different cases: the cases of closed and open Newton-Cotes  quadrature formulas (NC) as well as the case of Gaussian quadratures of Legendre-, Radau- and  Lobatto-type. 

\subsection{New way based on closed Newton-Cotes formulas}
\label{SS-cNC}
\noindent
{\bf(Step 1)} For a closed NC with the nodal pints $z_{n,i}=\xn+h\frac{i-1}{s-1},$ $i=1,\ldots,s,$ we have $$c_i=\frac{i-1}{s-1}, \ i=1,\ldots,s.$$

\noindent
{\bf(Step 2)} Integrating \eqref{ivp_eq} over $[\xn,\xnn]$ and applying the closed NC from Step 1 to approximate  the integral of $f(x,y(x))$ over $[\xn,\xnn]$  we get 
\begin{align}\label{s2}
\ynn=\yn+h\sum_{i=1}^s b_i K_i, \quad b_i=\intx{\xn}{\xnn}{ \ell_i(x)}, \ i=1,\ldots,s.
\end{align}
Recall that  $K_i=f(z_{n,i},y(z_{n,i})).$ 
 The coefficients $b_i$ are simply the integrals of the corresponding Lagrange basis interpolation polynomials, analogously as in the case of standard collocation IRK, cf. Remark~\ref{R-bi}. Indeed, replacing $f(x,y(x))$ by its unique Lagrange interpolation polynomial corresponding to the nodal points from  Step 1 we get
\begin{align*}
y(\xnn)=y(\xn)+ \intx{\xn}{\xnn}{f(x,y(x))} \approx \ynn=\yn + \sum_{i=1}^s f(z_{n,i},y(z_{n,i})) \intx{\xn}{\xnn}{ \ell_i(x)}.
\end{align*}
The difference compared to collocation methods from Subsubsection~\ref{SSS-sc} lies in deriving the entries of the matrix $\mathbf{A}.$ 

\noindent
{\bf(Step 3)} Let $s\geq 3.$ Since $z_{n,1}=\xn$ and $z_{n,s}=\xnn,$ we  have $S=(s-2)$ unknown stages. Thus we consider the first $S$ equations of \eqref{nr} to which we apply the modified Newton-Cotes quadrature \eqref{mNCc} with $H=\frac{h}{s-1}.$ 
This results in a system of $S$ equations with the unknowns $y(z_{n,2}),$ $\ldots,$ $y(z_{n,s-1}),$ 
\begin{equation}\label{nr_plug}
\begin{aligned}
  h\yn+ H^2\sum_{i=1}^s W_{2,i}K_i&=H\sum_{i=1}^s W_{1,i} y(z_{n,i})\\
 \frac{h^2}{2}\yn+H^3\sum_{i=1}^s W_{3,i}K_i&=H^2\sum_{i=1}^s W_{2,i}y(z_{n,i})\\
&\ldots \\
\frac{h^S}{S!}\yn+H^{S+1}\sum_{i=1}^s W_{S+1,i}K_i&=H^S\sum_{i=1}^s W_{S,i}y(z_{n,i}),
\end{aligned}
\end{equation}
where $K_i=f(z_{n,i},y(z_{n,i}))$ are parameters.
 Note  that $y(z_{n,1})=y(\xn)\approx\yn$ is known from the previous computational step and $y(z_{n,s})=y(\xnn)\approx\ynn$ is given by \eqref{s2} in Step 2.

\noindent
{\bf(Step 4)} Each $y(z_{n,i}),$ $i=2,\ldots,s-1$ as a solution to  \eqref{nr_plug} taking into account \eqref{s2} can be expressed in the form
\begin{align*}
y(z_{n,i})=\yn+h\sum_{j=1}^s a_{i,j}K_j, \ i=2, \ldots, s-1,
\end{align*}
from which we get the coefficients $a_{i,j},$ $i=2,\ldots,s-1,$ $j=1,\ldots,s.$
Due to $y(z_{n,1})=y(\xn)\approx\yn$ we have $a_{1,j}=0,$ $j=1,\ldots,s,$ and from \eqref{s2} simply $a_{s,j}=b_j,$ $j=1,\ldots,s.$  

\medskip

For $\mathbf{s=2}$ we obviously get the Lobatto IIIA with 2 stages and order 2, the so-called {\it implicit trapezoidal method}. Since $K_1=f(\xn,y(\xn))$ and $K_2=f(\xnn,y(\xnn)),$ there are no unknown stages and thus no need to use repeated integrals \eqref{nr}. 

For $\mathbf{s=3}$ there is only one unknown stage $K_2=f(z_{n,2},y(z_{n,2}))$ with $z_{n,2}=\xn+\frac{1}{2}h.$ Thus, from  the first equation of \eqref{nr_plug} we get 
\begin{align*}
h \yn+ \frac{2H^2}{3}\left(K_1+2K_2\right)=\frac{H}{3}\left(\yn+2y(z_{n,2})+\ynn\right), \ H=\frac{h}{2}.
\end{align*}
In addition, \eqref{s2} results in
\begin{align*}
\ynn =\yn + \frac{h}{6}\left(K_1+4K_2+K_3 \right).
\end{align*}
Therefore, 
\begin{align*}
y(z_{n,2}) &= \left(\frac{3}{2}-\frac{1}{8} -\frac{1}{4}\right)\yn+ \frac{h}{4}\left(K_1+2K_2\right)- \frac{h}{24}\left(K_1+4K_2+K_3 \right) \\
& = \yn+\frac{h}{24}\left(5K_1+8K_2-K_3 \right).
\end{align*}
We conclude
\begin{align*}
 \mathbf{A}=\left(\begin{array}{ccc}
0 & 0 & 0\\
\frac{5}{24}&\frac{8}{42}&-\frac{1}{24}\\
\frac{1}{6}&\frac{2}{3}&\frac{1}{6}\\
\end{array}\right),\quad \mathbf{b}^T=\left(\frac{1}{6},\frac{2}{3},\frac{1}{6}\right),
 \quad \mathbf{c}=\left(
 \begin{array}{c}
0 \\
\frac{1}{2}\\
1\\
\end{array}\right).
\end{align*}
This method is again known as Lobatto IIIA.

For $\mathbf{s=4}$ we deal with the well-known ${ 3/ 8}$-Simpson rule.
It means we divide the interval $[\xn,\xnn]$ into three subintervals of equal length $H=\frac{h}{3}.$ Then 
\begin{align*}
\intx{\xn}{\xnn}{f(x,y(x))} \approx \frac 3 8 H \left(K_1+3K_2+3K_3+ K_4\right),
\end{align*} 
where $z_{n,i}=x_n+(i-1)H,$ $i=1,2,3,4.$ Thus $c_i=\frac{i-1}{3},$ $i=1,2,3,4,$ and
Step 2 yields 
\begin{align}\label{g1}
\ynn=\yn +  \frac 3 8 H \left(K_1+3K_2+3K_3+ K_4\right).
\end{align}
It means $b_1=\frac{1}{8}=b_4$ and $b_2=\frac{3}{8}=b_3.$ 
Since the stages $K_2$ and $K_3$ including the values $y(z_{n,2})$ and $y(z_{n,3})$ are at this point unknown, we need two additional equations to close the system. Therefore we consider the first two identities of \eqref{nr_plug} to get
\begin{align*}
 3H\yn&+\frac{3}{40} H^2 \left(13K_1+36K_2+9K_3+ 2K_4\right)=\frac 3 8 H\left(\yn+3y(z_{n,2})+3y(z_{n,3})+\ynn\right)\\
 \frac{9H^2}{2}\yn &+\frac{9}{80} H^3 \left(12K_1+27K_2+0 K_3+ K_4 \right)=\frac{3}{40} H^2 \left(13\yn+36y(z_{n,2})+9y(z_{n,3})+ 2\ynn\right).
\end{align*}
Let us denote $\alpha_j\in\{1,3,3,1\},$ $\beta_j\in\{13,36,9,2\},$ $\gamma_j\in\{12,27,0,1\}.$
 Using \eqref{g1} we further rewrite the latter equations to get 
\begin{subequations}\label{nnr}
\begin{align}
 y(z_{n,2})&= 2\yn + \frac{2}{3} H \sum_{i=1}^4\left(\frac{\beta_i}{10}-\frac{3\alpha_i}{16}\right)K_i-y(z_{n,3})\label{nnr1}\\
 y(z_{n,3})&= 5\yn + \frac{1}{6} H \sum_{i=1}^4\left(\gamma_i-\frac{\alpha_i}{2}\right)K_i-y(z_{n,2}).\label{nnr2}
\end{align}
\end{subequations}
Now we substitute \eqref{nnr2} into \eqref{nnr1} to obtain
\begin{align}\label{g2}
y(z_{n,2})&= \yn + \frac{H}{18}  \sum_{i=1}^4\left(\frac{\alpha_i}{4}-\frac{2\beta_i}{5}+\gamma_i\right)K_i, 
\end{align}
and analogously, \eqref{nnr1} into \eqref{nnr2} to get
\begin{align}\label{g3}
y(z_{n,3})&= \yn + \frac{H}{18}\sum_{i=1}^4\left(-\frac{5\alpha_i}{2}+\frac{8\beta_i}{5}-\gamma_i\right)K_i. 
\end{align}
Now, let $$A_i=\frac{\alpha_i}{4}-\frac{2\beta_i}{5}+\gamma_i\in \left\{\frac{141}{20}, \frac{267}{20}, -\frac{57}{20}, \frac{9}{20}\right\}, \quad  B_i=-\frac{5\alpha_i}{2}+\frac{8\beta_i}{5}-\gamma_i\in\left\{\frac{63}{10}, \frac{231} {10}, \frac{69}{10}, -\frac{3}{10}\right\}.$$
Hence \eqref{g1}, \eqref{g2} and \eqref{g3} become 
\begin{align*}
\ynn&=\yn+\frac{3H}{8} \sum_{i=1}^4\alpha_iK_i, \quad 
y(z_{n,2})= \yn + \frac{H}{18}  \sum_{i=1}^4A_iK_i, \quad 
y(z_{n,3})= \yn + \frac{H}{18}\sum_{i=1}^4B_iK_i. 
\end{align*}
Recall $H=\frac h 3$ and
$K_i=f(z_{n,i},y(z_{n,i})).$  We have derived four stage IRK given by
\begin{align*}
\ynn&=\yn+ h \sum_{i=1}^4\frac{\alpha_i}{8}K_i\\
K_1&=f(\xn,\yn)\\
K_2&=f\left(\xn+\frac h 3,\yn + h  \sum_{i=1}^4 \frac{A_i}{54}K_i\right) \\
  K_3&=f\left(\xn+\frac{2}{3}h,\yn + h\sum_{i=1}^4\frac{B_i}{54}K_i\right)\\
   K_4&=f\left(\xn+h, \yn + h \sum_{i=1}^4\frac{\alpha_i}{8}K_i\right) .
\end{align*}
We shall refer to it as {\bf nIRK4}. Note that Lobatto IIIA with 4 stages differs in the choice of $c_i$ and sIRK4 differs in the coefficients $a_{2,j},$ $a_{3,j},$ $j=1,2,3,4.$ 

\begin{Theorem}[properties of  nIRK4]\label{theo-4}
Newly derived four stage implicit Runge-Kutta method  given by Butcher's tableau 
\begin{align}\label{BT_nIRK4}
\begin{array}{c|cccc}
0 & 0 & 0 & 0 & 0 \\
\frac 1 3 & \frac{47}{360} & \frac{89}{360} & -\frac{19}{360} & \frac{3}{360} \\
\frac 2 3 & \frac{21}{180} & \frac{77}{180} & \frac{23}{180} & -\frac{1}{180} \\
1 & \frac 1 8 & \frac 3 8 & \frac 3 8 & \frac 1 8 \\
\hline 
 & \frac 1 8 & \frac 3 8 & \frac 3 8 & \frac 1 8\\
\end{array}
\end{align}
 is  stiffly accurate with explicit first line. It is  A-stable with the order of convergence $p=4,$  and the stage order $\tilde{q}=3.$
\end{Theorem}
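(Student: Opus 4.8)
The two structural claims are read off directly from the tableau \eqref{BT_nIRK4}: the first row of $\mathbf A$ vanishes, so $a_{1,j}=0$ for $j=1,\dots,4$ and the method has explicit first line; and the fourth row of $\mathbf A$ coincides with $\mathbf b^{T}$, so $a_{4,j}=b_j$ and the method is stiffly accurate. The remaining work is to pin down the order, the stage order, and A-stability, and my plan is to route everything through the simplifying conditions \eqref{sor} rather than checking the eight trees of Table~\ref{T-oc} by hand.

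For the order I would first verify $B(4)$, i.e. $\sum_i b_i c_i^{k-1}=1/k$ for $k=1,\dots,4$; since $(\mathbf b,\mathbf c)$ is exactly the closed Newton--Cotes ($3/8$ Simpson) rule, which integrates cubics exactly, this is immediate. Next I would verify $C(3)$, namely $\sum_j a_{i,j}c_j^{k-1}=c_i^{k}/k$ for $k=1,2,3$ and every $i$, by a short row-by-row computation on \eqref{BT_nIRK4} (for instance row $i=2$ gives $\sum_j a_{2,j}c_j=\tfrac1{18}=c_2^2/2$ and $\sum_j a_{2,j}c_j^2=\tfrac1{81}=c_2^3/3$). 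With $B(4)$ and $C(3)$ in hand, Theorem~\ref{theo-soc} applied with $p=4$, $q=3$, $r=0$ (so $p\le q+r+1$ and $p\le 2q+2$) yields order at least $4$. To see the order is exactly $4$, I would exhibit the failing fifth-order condition $\sum_i b_i c_i^4=\tfrac{11}{54}\neq\tfrac15$, so $B(5)$ fails and order $5$ is impossible. Finally, since $B(4)$ and $C(3)$ hold while $C(4)$ fails (row $i=2$ gives $\sum_j a_{2,j}c_j^3=\tfrac1{540}\neq\tfrac1{324}=c_2^4/4$), Definition~\ref{def-so} gives stage order $\tilde q=3$.

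For A-stability I would compute the stability function $R(z)=\det(\mathbf I-z\mathbf A+z\mathbf e\mathbf b^{T})/\det(\mathbf I-z\mathbf A)$ explicitly. Because the first row of $\mathbf A$ is zero, expanding $\det(\mathbf I-z\mathbf A)$ along its first row collapses it to $\det(\mathbf I-z\tilde{\mathbf A})$ for the lower-right $3\times3$ block $\tilde{\mathbf A}$, a cubic $Q(z)=1-z\,\mathrm{tr}\,\tilde{\mathbf A}+z^2 m_2-z^3\det\tilde{\mathbf A}$ with $m_2$ the sum of principal $2\times2$ minors; evaluating these three symmetric functions gives $\mathrm{tr}\,\tilde{\mathbf A}=\tfrac12$, $m_2=\tfrac1{10}$, $\det\tilde{\mathbf A}=\tfrac1{120}$, so
\begin{equation*}
Q(z)=1-\tfrac{z}{2}+\tfrac{z^2}{10}-\tfrac{z^3}{120},
\end{equation*}
which is exactly the denominator of the $(3,3)$ diagonal Pad\'e approximant of $\mathrm e^{z}$. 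Since the method has order $4$ we know $R(z)=\mathrm e^{z}+\mathcal O(z^{5})$, and since the numerator $P(z)$ has degree at most $3$ (the matrix $\mathbf A-\mathbf e\mathbf b^{T}$ has zero last row, so $\det(\mathbf I-z\mathbf A+z\mathbf e\mathbf b^{T})$ reduces to an upper-left $3\times3$ minor), matching $R$ to $\mathrm e^{z}$ through order $4$ forces $P(z)=1+\tfrac{z}{2}+\tfrac{z^2}{10}+\tfrac{z^3}{120}=Q(-z)$. Thus $R$ is the $(3,3)$ Pad\'e approximant of $\mathrm e^{z}$. A-stability is then immediate from the symmetry $P(z)=Q(-z)$: on the imaginary axis $Q(\mathrm i y)=\overline{P(\mathrm i y)}$ gives $|R(\mathrm i y)|=1$, while $|R(\infty)|=|{-1}|=1$; and the poles of $R$ (the zeros of $Q$) are the negatives of the zeros of $P\sim z^3+12z^2+60z+120$, which is Hurwitz by the Routh criterion ($12\cdot 60>120$), so all poles lie in the open right half-plane and $R$ is analytic on $\overline{\mathbb C^{-}}$. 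The maximum modulus principle on the left half-plane then yields $|R(z)|\le 1$ for $\mathrm{Re}\,z\le 0$.

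The only place genuine effort is needed is the explicit determinant evaluation producing $Q(z)$ (equivalently, the three symmetric functions of $\tilde{\mathbf A}$): once $Q$ is identified as the Pad\'e denominator, the order bound, the $C(3)$ row checks, and the symmetry/maximum-modulus route to A-stability are all short and mechanical. I would therefore regard that determinant computation as the main, though routine, obstacle.
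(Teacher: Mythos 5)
Your proof is correct, and the numerical checkpoints all verify: $\mathrm{tr}\,\tilde{\mathbf A}=\tfrac12$, $m_2=\tfrac1{10}$, $\det\tilde{\mathbf A}=\tfrac1{120}$, the $C(3)$ row identities, $\sum_i b_ic_i^4=\tfrac{11}{54}$, and $\sum_j a_{2,j}c_j^3=\tfrac1{540}$; your $Q(z)=1-\tfrac z2+\tfrac{z^2}{10}-\tfrac{z^3}{120}$ is exactly the paper's denominator $z^3-12z^2+60z-120$ rescaled by $-\tfrac1{120}$, so the two stability functions agree. For the structural claims and the order/stage order you follow essentially the paper's own route (read stiff accuracy and the explicit first line off the tableau, then invoke $B(4)$, $C(3)$, $D(0)$ of \eqref{sor} with Theorem~\ref{theo-soc} and Definition~\ref{def-so}); your one addition is a welcome sharpening, since by exhibiting the violated conditions $B(5)$ and $C(4)$ you prove the order is \emph{exactly} $4$ and the stage order \emph{exactly} $3$, maximality that the paper asserts without displaying a failing condition. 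The genuine divergence is A-stability: the paper verifies directly that for $z=u+\mathrm{i}v$ one has $|R(z)|\le1\Leftrightarrow u(u^4+v^4+2u^2v^2+70u^2+30v^2+600)\le0$, i.e.\ $u\le0$, an elementary computation that in one stroke also identifies the stability domain as precisely the closed left half-plane; you instead exploit structure (zero first row of $\mathbf A$ collapses the denominator to a $3\times3$ determinant, stiff accuracy bounds $\deg P\le 3$, and the already-established order forces $P(z)=Q(-z)$), identify $R$ as the $(3,3)$ diagonal Pad\'e approximant of $\mathrm e^z$, and conclude softly via $|R|=1$ on $\mathrm i\mathbb R$, the Routh criterion placing all poles in the open right half-plane, and the maximum modulus principle. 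Your route buys conceptual economy — the numerator comes for free and no expansion of $|R(u+\mathrm{i}v)|^2$ is needed — and it situates the method in the classical Pad\'e theory, while the paper's computation stays self-contained and additionally gives $|R(z)|>1$ for ${\rm Re}(z)>0$. One step you should make explicit: the claim that order $4$ implies $R(z)=\mathrm e^z+\mathcal O(z^5)$ rests on the standard fact that one step of the method applied to the test problem \eqref{IVP_A} produces $y_1=R(h\lambda)$, so the local error there is $\mathrm e^{h\lambda}-R(h\lambda)$; this is classical but is nowhere stated in the paper, so it deserves a citation or a line of proof.
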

\begin{proof}
Obviously, the first line is explicit and since $a_{s,j}=b_j,$ $j=1,\ldots,s$ the method is stiffly accurate according to \cite{butcher_oc}. 
The method is indeed A-stable. The stability function  reads
\begin{align*}
R(z)= -\frac{z^3+ 12z^2+ 60 z +120}{z^3- 12z^2+ 60 z-120},
\end{align*}
and direct calculation for $z=u+iv \in \mathbb C,$  yields
\begin{align*}
|R(z)| \leq 1 \Leftrightarrow   u(u^4+v^4+2u^2v^2+70u^2+30v^2+600) \leq 0
\end{align*}
which is only true for $u\leq 0.$ Thus the stability domain is  $S=\{z\in \mathbb C: {\rm Re}(z)\leq 0\}.$
It is easy to see that the method satisfies the simplifying order conditions $B(p),$ $C(q)$ and $D(r)$ with $p=4,$ $q=3$ and $r=0,$ which by Theorem~\ref{theo-soc} and Definition~\ref{def-so}  means the order is $p=4,$ and the stage order is $\tilde{q}=q=3.$
Another possibility is to check that all 8 order conditions from Table~\ref{T-oc} up to order $p=4$ are satisfied. 
\end{proof}

For $\mathbf{s\geq 5}$ one can analogously as above derive IRK  based on closed Newton-Cotes formulas.
 We give  Butcher's tableau and the corresponding order of {\bf nIRK5} in Table~\ref{T-NCclosed}.  
 
 Note that {\it all {\bf nIRK$s$} are stiffly accurate} and have {\em explicit first line} which follows from the derivation described above.

\subsection{New way based on open Newton-Cotes formulas}
\label{SS-oNC}
The only difference in deriving the coefficients of IRK by an open Newton-Cotes formula compared to the closed one is that the end points of the interval $[\xn,\xnn]$ are not included. It means the resulting method neither has the explicit first line nor is stiffly accurate according to \cite{butcher_oc}.

\smallskip
\noindent
{\bf (Step 1)} In order to derive an $s$ stage Runge-Kutta method, we need to consider $s+1$ subintervals of $[\xn,\xnn]$ with the corresponding nodal points $z_{n,i}=\xn+h\frac{i}{s+1},$ $i=1, \ldots, s.$ Obviously, $$c_i=\frac{i}{s+1},\ i=1,\ldots, s.$$

\noindent
{\bf (Step 2)} By the same reason as before, it holds that 
\begin{align}\label{s2o}
\ynn=\yn+h\sum_{i=1}^s b_i K_i, \quad b_i=\intx{\xn}{\xnn}{ \ell_i(x)}, \ i=1,\ldots,s.
\end{align}
Note the difference compared to \eqref{s2} lies in $\ell_i(x)$ being polynomials defined for different nodal points and   of different degree.

\noindent
{\bf (Step 3)}  For $s\geq 2$ we have $S=s$ unknown stages and the corresponding first $s$ equations of \eqref{nr} to be solved. Note that $y(z_{n,1})\neq y(\xn)$ and $y(z_{n,s})\neq y(\xnn).$ We approximate the integrals in \eqref{nr} by the modified open Newton-Cotes quadrature formula \eqref{mNCo} which results in 
\begin{equation}\label{nr_plug_2}
\begin{aligned}
  h\yn+ H^2\sum_{i=1}^s W_{2,i}K_i&=H\sum_{i=1}^s W_{1,i} y(z_{n,i})\\
 \frac{h^2}{2}\yn+H^3\sum_{i=1}^s W_{3,i}K_i&=H^2\sum_{i=1}^s W_{2,i}y(z_{n,i})\\
&\ldots \\
\frac{h^S}{S!}\yn+H^{S+1}\sum_{i=1}^s W_{S+1,i}K_i&=H^S\sum_{i=1}^s W_{S,i}y(z_{n,i}),
\end{aligned}
\end{equation}
where $K_i=f(z_{n,i},y(z_{n,i}))$ are parameters, and $y(z_{n,1}),\ldots,y(z_{n,s})$ are the unknowns.
Note that $y(\xn)\approx\yn$ is known from the previous computational step and $H=\frac{h}{s+1}.$ 

\noindent
{\bf (Step 4)} We solve system \eqref{nr_plug_2} and as a result we obtain the coefficients $a_{i,j},$ $i,j=1,\ldots,s$ from 
  \begin{align*}
y(z_{n,i})=\yn+h\sum_{j=1}^s a_{i,j}K_j, \ i=1, \ldots, s.
\end{align*}

For $s=3,4$ the newly derived IRK are listed in Table~\ref{T-NCopen} and are referred to as {\bf nIRK3o} and {\bf nIRK4o}. They are both A-stable and of order $p=4$.  

As opposed to the methods based on closed Newton-Cotes formulas, {\bf nIRK$s$o} are neither stiffly accurate nor have explicit first line.

\begin{Remark}
Dividing the interval $[\xn,\xnn]$ into $s$ subintervals of equal length $H=\frac{h}{s}$ yields either $(s+1)$ or $(s-1)$ stage method for closed or open NC, respectively. 
\end{Remark} 

\subsubsection*{New way based on Newton-Cotes formulas combined with Cauchy's formula}

As we have seen, weights in modified Newton-Cotes quadrature formulas \eqref{mNC} are obtained as repeated integrals of the basis Lagrange interpolation polynomials. However, we can also rewrite them using Cauchy's repeated  integration formula \eqref{CF} and then compute the weights.  
When we use this approach in \eqref{nr} in {Step 3} we derive new yet different implicit Runge-Kutta methods. Their Butcher's tableaux and corresponding orders for $s=2,3,4,5$ in the case of closed formulas  are given in Tables~\ref{T-NCclosed}, \ref{T-RL} and for $s=3,4$ in the case of open formulas are given in Table~\ref{T-NCopen}. They are refereed to as {\bf nIRK$s$c} and {\bf  nIRK$s$oc}, respectively. Here ``{\bf c}'' stands for ``Cauchy''.

\subsection{New way based on Gauss quadratures}
\label{SS-ngq}

It turns out that our approach based on repeated integrals combined with Gauss quadratures yields the well-known implicit Runge-Kutta methods. For completeness, let us briefly summarize the results. 

\subsubsection*{Gauss-Legendre quadrature}
 
 \noindent
{\bf (Step 1)}
 The choice of $c_i$ is obvious:  the roots of the Legendre polynomial $P_s^*(x)$ shifted to $[0,1].$ 

 \noindent
{\bf (Step 2) }
 The coefficients $b_i$ are exactly the weights $w_i$ of the corresponding Gauss-Legendre quadrature. Indeed,  
\begin{align*}
\intx{\xn}{\xnn}{f(x,y(x))} \approx h\sum_{i=1}^s f(z_{n,i},y(z_{n,i})) w_i =h\sum_{i=1}^s b_i K_i.
\end{align*}
Recall $K_i=f(z_{n,i},y(z_{n,i}))$ with $z_{n,i}=\xn+c_i h,$ $i=1,\ldots,s.$ 

 \noindent
{\bf (Step 3)}
Let $s\geq 2.$ We realize that $z_{n,1}\neq \xn$ and $z_{n,s}\neq \xnn.$ Thus, we have $S=s$ unknown stages to be determined in order to compute $\ynn$ given $\yn.$ To this end, we consider the first $s$ identities of \eqref{nr}. The repeated integrals of $f(x,y(x))$ and $y(x)$ are now approximated by means of \eqref{mGQ}. The system reads
\begin{equation}\label{nr_plug_g}
\begin{aligned}
  h\yn+ \frac{h^2}{2!}\sum_{i=1}^s w_{2,i}K_i&=h\sum_{i=1}^s w_{1,i} y(z_{n,i})\\
 \frac{h^2}{2}\yn+\frac{h^3}{3!}\sum_{i=1}^s w_{3,i}K_i&=\frac{h^2}{2!}\sum_{i=1}^s w_{2,i}y(z_{n,i})\\
&\ldots \\
\frac{h^s}{s!}\yn+\frac{h^{s+1}}{(s+1)!}\sum_{i=1}^s w_{s+1,i}K_i&=\frac{h^s}{s!}\sum_{i=1}^s w_{s,i}y(z_{n,i}).
\end{aligned}
\end{equation}

 \noindent
{\bf (Step 4)} Solving the underlying system yields $y(z_{n,1}),$ $\ldots,$ $y(z_{n,s}),$ in the form 
  \begin{align*}
y(z_{n,i})=\yn+h\sum_{j=1}^s a_{i,j}K_j, \ i=1, \ldots, s,
\end{align*}
 and finally we have the coefficients $a_{i,j},$ $i,j=1,\ldots,s.$

\subsubsection*{Gauss-Radau quadrature}

\noindent
{\bf (Step 1)}
 Set $c_1=0$ and $c_i$ stemming  from left Radau quadratures or set $c_s=1$ and $c_i$ stemming  from right Radau quadratures as described above in Subsubsection~\ref{SSS-girk}.

\noindent 
{\bf (Step 2)} Analogously as for the Gauss-Legendre quadrature, the coefficients $b_i$ coincide with the weights of Radau left quadrature formula or Radau right quadrature formula, respectively.

 \noindent 
{\bf (Step 3)} In both cases we have $S=s-1$ unknown stages. We consider the first $s-1$ equations of \eqref{nr} where the repeated integrals are approximated by \eqref{mGQ} using the corresponding nodal points and weights of left or right Radau quadratures, respectively. 

\noindent 
{\bf (Step 4)}
Solving the system of $s-1$ first equations of \eqref{nr_plug_g} with unknowns $y(z_{n,2}),$ $\ldots,$ $y(z_{n,s})$ in the case of left Radau quadrature, or with unknowns $y(z_{n,1}),$ $\ldots,$ $y(z_{n,s-1})$ in the case of right Radau quadrature, yields the coefficients $a_{i,j},$ $i,j=1,\ldots,s.$ Note that  $a_{1,j}=0$ for left Radau quadrature
and $a_{s,j}=b_j$  for right  Radau quadrature formulas.

\subsubsection*{Gauss-Lobatto quadrature}
 
 \noindent 
{\bf (Step 1)}
We set $c_1=0,$  $c_s=1$ and $c_i,$ $i=2,\ldots,s-2,$ to be the roots  of  $P_s^*(x)-P_{s-2}^*(x).$ 

 \noindent 
{\bf (Step 2)}
The coefficients $b_i$ coincide with the weights of chosen Lobatto quadrature formula, and again we have
\begin{align}\label{s2L}
\ynn=\yn+h\sum_{i=1}^s b_i K_i.
\end{align}

 \noindent 
{\bf (Step 3)}
We need to consider $S=s-2$ equations of \eqref{nr} together with \eqref{s2L} on noting that $\yn$ is known from the previous computational step. We approximate the repeated integrals by means of \eqref{mGQ} using the nodal points and weights corresponding to chosen Gauss-Lobatto quadrature formula. 

 \noindent 
{\bf (Step 4)}
Coefficients $a_{i,j},$ $i,j=2,\ldots,s-2,$ are obtained from 
 \begin{align*}
y(z_{n,i})=\yn+h\sum_{j=1}^s a_{i,j}K_j, \ i=2, \ldots, s-2
\end{align*}
which solve the system from Step 3  taking into account \eqref{s2L}.
 Since $z_{n,1}=\xn$ we have the explicit first line,  $a_{1,j}=0,$ $j=1,\ldots,s.$ Finally, $a_{s,j}=b_j,$ $j=1,\ldots,s$ because $z_{n,s}=\xnn.$ 

The resulting IRK of Gauss-type ({\bf nIRK-G$s$},  {\bf nIRK-RI$s$}, {\bf nIRK-RII$s$}, {\bf nIRK-L$s$} according to type of Gauss quadrature) are listed in Table~\ref{T-RL}  and are classified according to existing families of general Gauss-type IRK.

Note that the {\it advantage of our proposed approach} compared to the derivation of general IRK based on Gauss quadratures as described in Subsubsection~\ref{SSS-girk} lies in the computation of $a_{i,j}$ and $b_i.$ For instance, in the case of Gauss-Legendre quadrature, our strategy directly yields $b_i$ and requires solving system of  $s$ equations of \eqref{nr_plug_g} to determine $a_{i,j}$. The general approach requires solving $s^2+s$ equations stemming from the simplifying order conditions $B(s)$ and $C(s)$.

\section{Numerical experiments}\label{S-ne}

We present numerical experiments for selected newly derived implicit Runge-Kutta methods and compare errors and {\it experimental order of convergence} (EOC) with standard collocation implicit Runge-Kutta methods  with the same number of stages. 

To begin we define the errors that shall be used to analyze numerical solutions. Let 
\begin{align}\label{errors1}
e_a:=\max_{n=1,\ldots,N}|y(x_n)-y_n|,  \
e_r:=\frac{e_a}{\underset{n=1,\ldots,N}{\max}|y(x_n)|}, 
\ e_{2}:=h\sqrt{\sum_{n=1}^N|y(x_n)-y_n|^2}, \ e_{r2}:=\frac{e_2}{h\sqrt{\sum\limits_{n=1}^N|y(x_n)|^2}}
\end{align}
be the absolute and the relative errors in the maximum and the discrete $L^2$-norm, respectively. 
 Further, by
\begin{align}\label{errors2}
e_b:=\max|y(b)-y_N|,\quad e_m:=\frac{\sum\limits_{n=1}^N|y(x_n)-y_n|}{N}, \quad e_n:=\sqrt{\sum_{n=1}^N|y(x_n)-y_n|^2}
\end{align}
we denote the maximal error at the end point of the computational interval, the mean of absolute errors at each point, and the Euclidean norm of the absolute errors, respectively. 
Finally,  EOC is given by 
\begin{align}\label{EOC}
EOC_{N,2N}:=\log_2\left(\frac{e^N}{e^{2N}}\right),
\end{align}
where $e^N$ is the error computed with the discrete step size $h=1/N$ and $e^{2N}$ is the error computed with half the discrete step size.

\subsection{Experiment 1: linear stability}

Firstly, we consider standard IVP to test linear stability of Runge-Kutta methods,
\begin{align*}
y'(x)&=-15y(x), \ x \in [0,1]\\
y(0)&=1
\end{align*}
with the exact solution $y(x)=\mathrm{e}^{-15x}.$ 

In the upper left half of Table~\ref{T-e1} we present relative errors and the corresponding EOC for the following four stage stiffly accurate implicit Runge-Kutta methods with the explicit first line:  \\
nIRK4 - new IRK  based on closed Newton-Cotes formula,\\ nIRK4c - new IRK based on closed Newton-Cotes formula using Cauchy's formula,\\ sIRK4 - standard collocation IRK with $\tau_i=i/4,$ $i=0,\ldots,4$.
\\
We term them ``closed'' as they include the approximation of function values at the end points $\yn$ and $\ynn.$ 

The upper right half of Table~\ref{T-e1}  contains relative errors and EOC for the following four stage fully implicit Runge-Kutta methods:\\
nIRK4o - new IRK  based on open Newton-Cotes formula,\\ nIRK4oc  - new IRK  based on open Newton-Cotes formula using Cauchy's formula,\\ sIRK4o - standard collocation IRK with $\tau_i=i/5,$ $i=1,\ldots,4$.\\
Obviously, we refer to these methods as ``open'' because, as opposed to the ``closed'' ones, their stages do not include the end points $\xn$ and $\xnn$. 
The graphs of relative errors in logarithmic scale for the four stage implicit Runge-Kutta methods from Table~\ref{T-e1} are depicted in Figures~\ref{F-e1-4c} and \ref{F-e1-4o}.
\begin{table}
\centering
 \caption{Experiment 1: relative errors and EOC for newly derived and standard implicit Runge-Kutta methods}\label{T-e1}
\begin{tabular}{rrcc||cc||cc||cc||cc||cc}
\toprule
\multicolumn{2}{r}{}& \multicolumn{2}{c||}{nIRK4} & \multicolumn{2}{c||}{nIRK4c} & \multicolumn{2}{c||}{sIRK4} &
 \multicolumn{2}{c||}{nIRK4o} & \multicolumn{2}{c||}{nIRK4oc} & \multicolumn{2}{c}{sIRK4o}\\
\cline{3-4}\cline{5-6}\cline{7-8}\cline{9-10}\cline{11-12}\cline{13-14}
N &&  $e_r$ & EOC & $e_r$ & EOC & $e_r$ & EOC & $e_r$ & EOC & $e_r$ & EOC & $e_r$ & EOC \\
\cline{3-4}\cline{5-6}\cline{7-8}\cline{9-10}\cline{11-12}\cline{13-14}
 2 && 4.67e-02    &  - & 1.41e-01 & - & 6.50e-02
    & - & 7.24e-03 & - & 2.77e-01& - & 7.82e-02 & -
    \\
 4 && 1.75e-04   & 8.06 &  4.78e-04 & 8.20 &  3.46e-04    & 7.55
 &  9.57e-06& 9.56 & 3.77e-02 & 2.88 & 7.12e-04 & 6.78\\
 8 &&   2.04e-06
    & 6.42 &   5.45e-05 & 3.13 &  1.13e-05     & 4.94
    & 2.82e-08   & 8.41 & 1.27e-04 &  8.22   & 1.57e-05 & 5.51\\
 16 && 2.89e-08    &  6.14 &  3.09e-06 & 4.14 &  5.44e-07     & 4.38
&  1.27e-10 & 7.80 & 2.39e-06& 5.76 & 7.45e-07 & 4.39 \\ 
 32 &&   4.40e-10
   &   6.04 &    1.87e-07 & 4.04 & 3.17e-08
     & 4.10 & 1.81e-12  & 6.13 & 8.86e-08& 4.75  & 4.33e-08 & 4.10 \\
 64 &&  6.84e-12
  & 6.01 &   1.16e-08 & 4.01 &  1.94e-09     & 4.03 & 8.30e-14 & 4.44 & 4.20e-09& 4.40& 2.66e-09 & 4.03 \\
 128 &&   1.07e-13
    &  6.00 &  7.25e-10 & 4.00 &  1.21e-10
     & 4.01 & 5.00e-15 & 4.08 & 2.26e-10& 4.21  & 1.66e-10 & 4.00\\
 \bottomrule
 \end{tabular}
 \begin{tabular}{rrcc||cc||cc||cc||cc||cc}
\toprule
\multicolumn{2}{r}{}& \multicolumn{2}{c||}{nIRK5} & \multicolumn{2}{c||}{nIRK5c} & \multicolumn{2}{c||}{sIRK5}
& \multicolumn{2}{c||}{nIRK3o} & \multicolumn{2}{c||}{nIRK3oc} & \multicolumn{2}{c}{sIRK3o}\\
\cline{3-4}\cline{5-6}\cline{7-8}\cline{9-10}\cline{11-12}\cline{13-14}
N &&  $e_r$ & EOC & $e_r$ & EOC & $e_r$ & EOC & $e_r$ & EOC & $e_r$ & EOC & $e_r$ & EOC \\
\cline{3-4}\cline{5-6}\cline{7-8}\cline{9-10}\cline{11-12}\cline{13-14}
 2 &&     7.24e-03 &  - & 4.33e-02 & - &  1.96e-02    & -&    4.67e-02  &  - &  1.63e-01 & - &   1.63e-01   & -\\
 4 & & 9.55e-06 & 9.56 &  2.04e-04 & 7.73 &  6.79e-05    & 8.18&    1.75e-04 & 8.06 &  2.81e-04 & 9.18 &  2.81e-04   & 9.19\\
 8 &&     2.76e-08    & 8.43 &  2.09e-06 & 6.60 &  6.56e-07     & 6.70&  2.04e-06   & 6.42 &   6.95e-05 & 2.01 &  6.95e-05     & 2.01\\
 16 &&    1.00e-10 & 8.11  &  2.99e-08& 6.12 & 9.09e-09     & 6.17&   2.89e-08  & 6.14  & 4.04e-06 & 4.11 &  4.04e-06    & 4.11\\  
 32 &&    3.83e-13  & 8.03   &   4.58e-10 & 6.03 &  1.38e-10     & 6.04&   4.40e-10  &  6.04  &   2.46e-07 & 4.04 &  2.46e-07     & 4.04\\
 64 && 1.00e-15 & 8.01&    7.12e-12& 6.01 &  2.14e-12    & 6.01&  6.84e-12 &     6.01 & 1.52e-08 & 4.01 &  1.52e-08    & 4.01\\
 128 &&   0.00     & 8.02  & 1.11e-13 & 6.00 &  3.30e-14     & 6.00&  1.07e-13    & 6.00  & 9.51e-10 & 4.00 & 9.50e-10     & 4.00\\
  \bottomrule
 \end{tabular}
\end{table}

In the lower left half of Table~\ref{T-e1} and Figure~\ref{F-e1-5} we present the results obtained  by selected five stage stiffly accurate implicit Runge-Kutta methods with the explicit first line:  \\
nIRK5 - new IRK  based on closed Newton-Cotes formula,\\ nIRK5c - new IRK based on closed Newton-Cotes formula using Cauchy's formula,\\ 
sIRK5 - standard collocation IRK with $\tau_i=i/5,$ $i=0,\ldots,5$.

Finally, the lower right half of Table~\ref{T-e1} and Figure~\ref{F-e1-3} contain the relative errors and EOC for selected three stage fully implicit Runge-Kutta methods:\\
nIRK3o - new IRK  based on open Newton-Cotes formula,\\ nIRK3oc  - new IRK  based on open Newton-Cotes formula using Cauchy's formula,\\ 
sIRK3o - standard collocation IRK with $\tau_i=i/4,$ $i=1,\ldots,3$.  
\begin{figure}
\centering
\begin{subfigure}[b]{0.49\textwidth}
\includegraphics[scale=0.56]{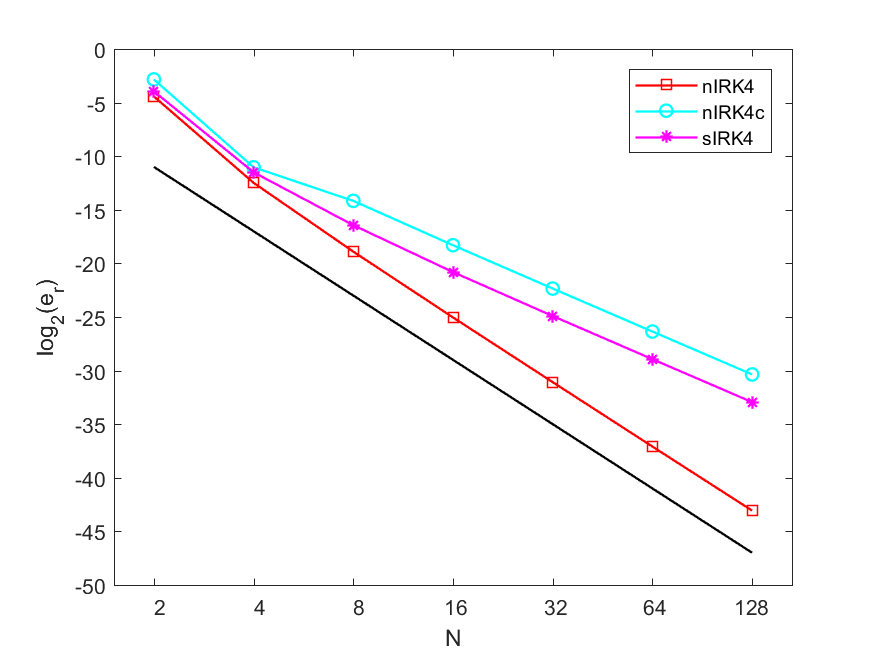}
\caption{solid line represents reference slope $N^{-6}$}\label{F-e1-4c}
\end{subfigure}
\begin{subfigure}[b]{0.49\textwidth}
\includegraphics[scale=0.56]{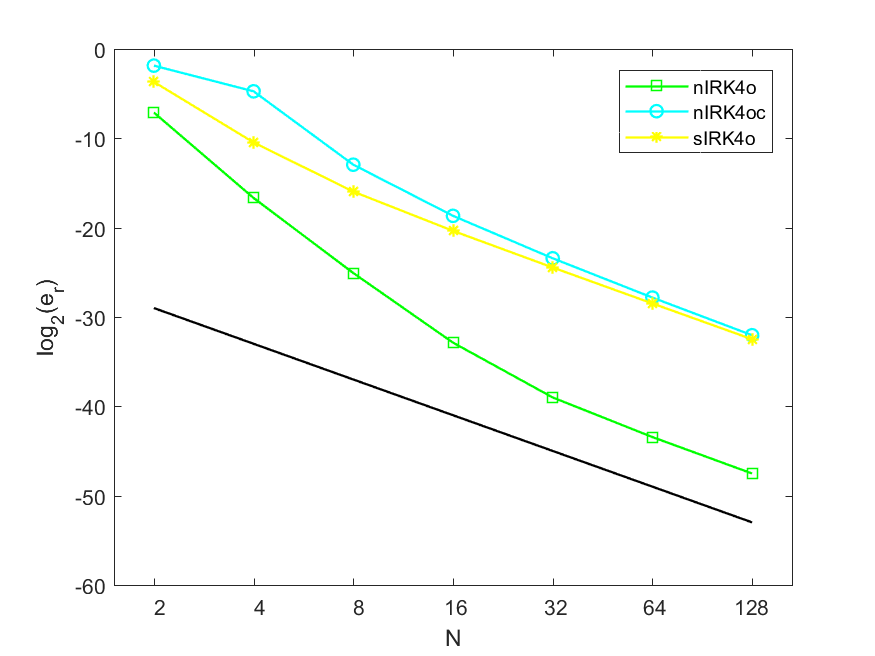}
\caption{solid line represents reference slope $N^{-4}$}\label{F-e1-4o}
\end{subfigure}
\begin{subfigure}[b]{0.49\textwidth}
\includegraphics[scale=0.56]{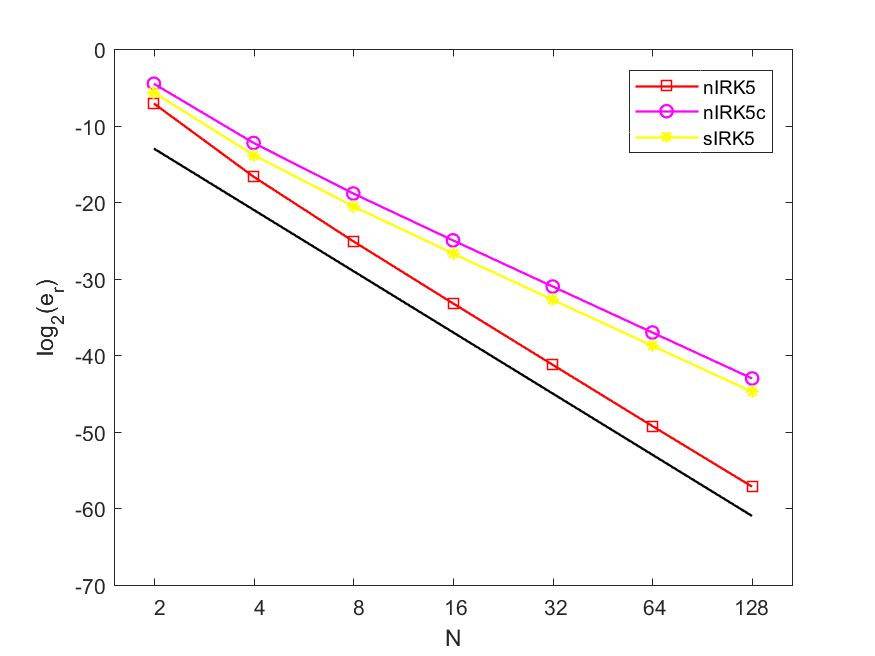}
\caption{solid line represents reference slope $N^{-8}$}\label{F-e1-5}
\end{subfigure}
\begin{subfigure}[b]{0.49\textwidth}
\includegraphics[scale=0.56]{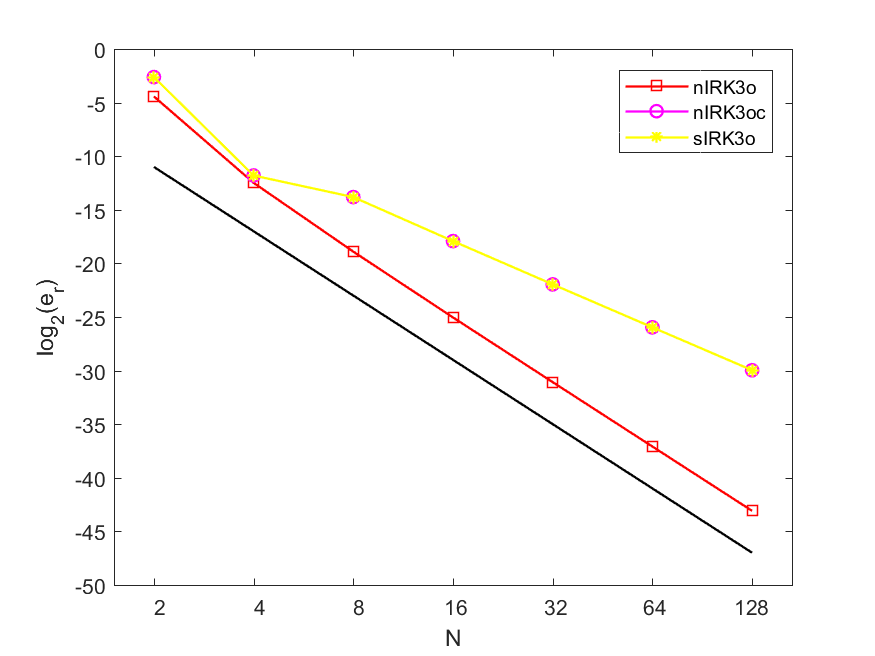}
\caption{solid line represents reference slope $N^{-6}$}\label{F-e1-3}
\end{subfigure}
\caption{Experiment 1: relative errors for newly derived and standard implicit Runge-Kutta methods}
\end{figure}

Note, in particular, the EOC of our newly derived methods nIRK4, nIRK5 and nIRK3o is 6, 8 and 6, respectively, whereas the theoretical orders of convergence are 4, 6 and 4, respectively.  The reason in the case of nIRK4 might be that this method satisfies 13 out of 16 order conditions for $p=6$ corresponding to linear ODEs from Table~\ref{T-oc-lin}. 
Similarly, nIRK3o satisfies 11 out of 16 order conditions from Table~\ref{T-oc-lin}.
The methods derived by  Cauchy's repeated integration formula \eqref{CF} yield the highest relative errors which  is in accordance with the theory. Indeed,  using \eqref{CF} to compute approximations of repeated integrals by Newton-Cotes formulas may cause a loss of precision, cf. \cite{NT}.
   
\subsection{Experiment 2: stiff scalar linear equations}

Solving stiff equations is difficult for many numerical methods. We test our newly derived implicit Runge-Kutta methods on the stiff initial value problems  and compare the results  with the standard implicit Runge-Kutta methods with the same number of stages. 
\begin{itemize}
\item[{\bf (2A)}] To begin, we consider a moderately stiff IVP:
\begin{align*}
y'(x)&=-100y(x)+99\mathrm{e}^{2x}, \ x \in [0,0.5]\\
y(0)&=0
\end{align*}
with the exact solution $\displaystyle y(x)=\frac{33}{34}(\mathrm{e}^{2x}-\mathrm{e}^{-100x}),$ see e.g. \cite{ahmad,ramos,7-10}.
\end{itemize}
We plot the relative errors  computed for the values $N=2^{k},$ $k=2,\ldots,10$. 
Figure~\ref{F-e2A} indicates good performance of our newly derived implicit Runge-Kutta methods with four and three stages. Their relative errors are  smaller than those of corresponding standard collocation methods. They all converge with EOC=4. Note, in particular, the performance of nIRK4 which is almost identical with the well-known LobattoIIIA with four stages (nIRK-L4), from now on referred to as Lobatto4, at least for  $N< 2^7$ with $e_r\approx 4e-11.$  These two methods, nIRK4 and Lobatto4, share the same number of stages and computational costs, but differ in the theoretical order of convergence, that is 4 and 6, respectively. 
\begin{figure}[!h]
\centering
\begin{subfigure}[b]{0.49\textwidth}
\includegraphics[scale=0.56]{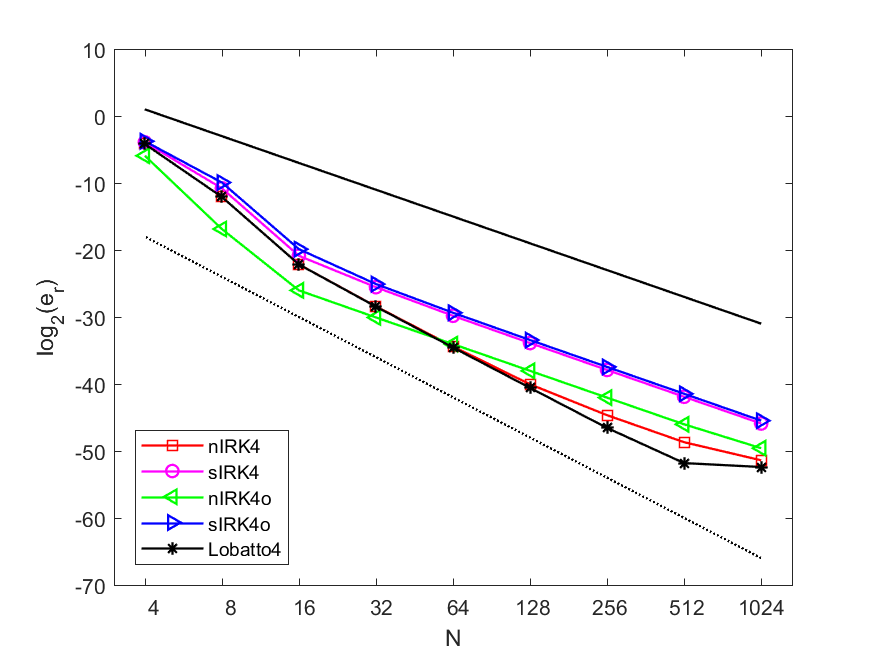}
\caption{four stage methods}
\end{subfigure}
\begin{subfigure}[b]{0.49\textwidth}
\includegraphics[scale=0.56]{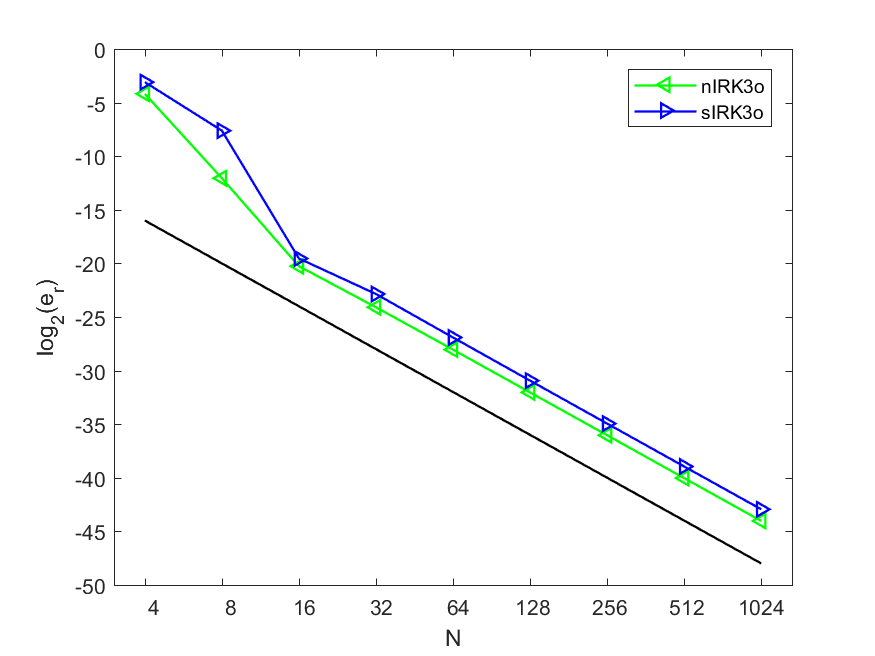}
\caption{three stage methods}
\end{subfigure}
\caption{Experiment (2A): relative errors; solid and dotted lines represent the reference slope  $N^{-4}$ and $N^{-6},$ respectively}\label{F-e2A}
\end{figure}

\begin{itemize}
\item[{\bf (2B)}] Now we consider a highly stiff IVP:
\begin{align*}
y'(x)&=-1000y(x)+\mathrm{e}^{-2x}, \ x \in [0,10]\\
y(0)&=0
\end{align*}
with the exact  solution $\displaystyle y(x)=\frac{1}{998}(\mathrm{e}^{-2x}-\mathrm{e}^{-1000x}),$ see, e.g. \cite{ramosRK,highly}.
\end{itemize}
The graphs of numerical solutions for $N=256$ computed by chosen four and five stage methods are depicted in Figure~\ref{F-e2B}. 
In the case of ``closed'' four stage methods we can observe small oscillations in approximate solutions which is not the case for ``open'' four stage methods, see Figure~\ref{F-e2B-4s}, and ``closed'' five stage methods, see Figure~\ref{F-e2B-5s}. Note that nIRK4 and sIRK4 produce almost identical approximations while nIRK4o and nIRK5 yield better approximations than their standard collocation counterparts sIRK4o and sIRK5, respectively. Recall that our ``open'' methods are derived using the open Newton-Cotes quadrature formulas excluding the endpoints of the interval, cf. Subsection~\ref{SS-oNC}. 
Table~\ref{T-e2B} contains the errors as defined in \eqref{errors1} and \eqref{errors2} together with the absolute error at the point $x=0.390625$ ($e_x$). For comparison we have added the corresponding errors for the fifth-order method introduced and analyzed in \cite{highly}.  The smallest values of errors $e_a,$ $e_x,$ $e_m,$ $e_n$ among chosen four and five stage methods are obtained by nIRK4o (fourth-order) and nIRK5 (sixth-order). The standard collocation IRK only exhibit smaller absolute error $e_b$ at the end point $b=10.$ Note again that our fourth-order method nIRK4 produces almost identical errors as the sixth-order Lobatto4.  
\begin{figure}
\centering
\begin{subfigure}[b]{0.49\textwidth}
\includegraphics[scale=0.56]{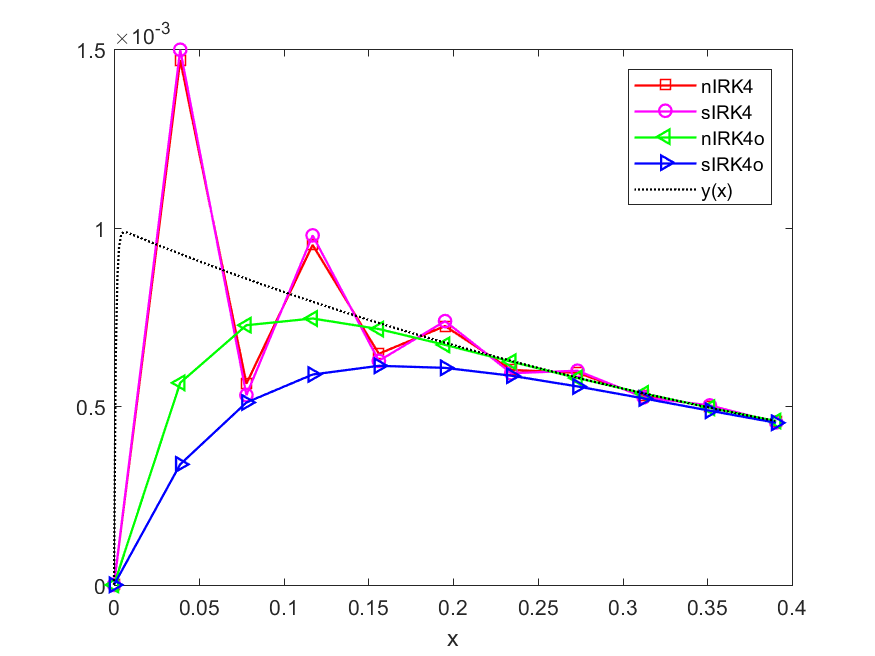}
\caption{four stage methods}\label{F-e2B-4s}
\end{subfigure}
\begin{subfigure}[b]{0.49\textwidth}
\includegraphics[scale=0.56]{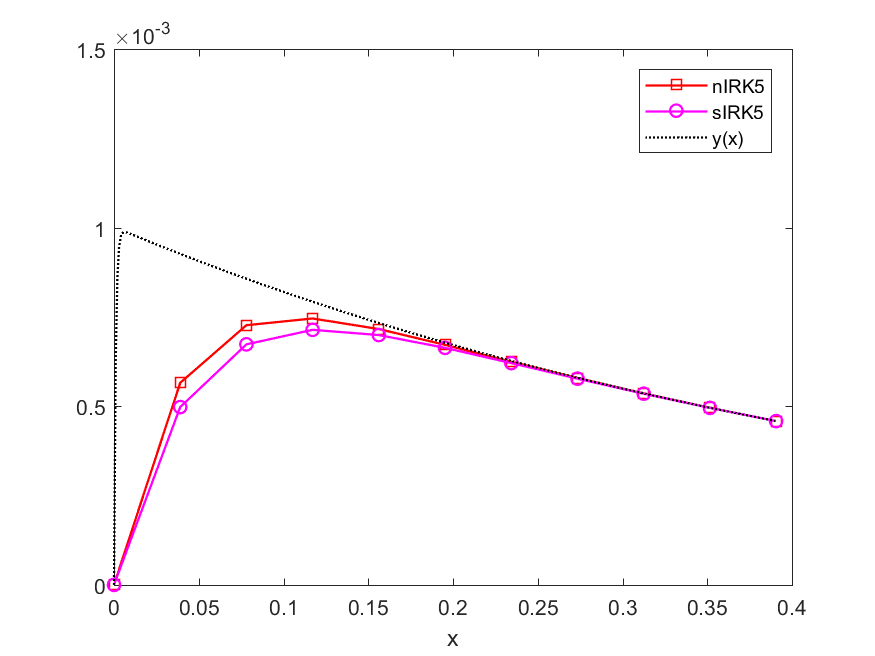}
\caption{five stage methods}\label{F-e2B-5s}
\end{subfigure}
\caption{Experiment (2B):  approximations of exact solution for $N=256$}\label{F-e2B}
\end{figure}
\begin{table}[!h]
\centering
\caption{Experiment (2B):  errors of numerical approximations computed for $N=256$} \label{T-e2B}
\begin{tabular}{rrc||c||c||c||c||c||c||c}
\toprule
 && {nIRK4}& {sIRK4}& {nIRK4o} & {sIRK4o}& {Lobatto4} & (11) in \cite{highly}& {nIRK5}&  {sIRK5}\\
\cline{3-3}\cline{4-4}\cline{5-5}\cline{6-6}\cline{7-7}\cline{8-8}\cline{9-9}\cline{10-10}
 $e_a$& & 5.43e-04&   5.71e-04& 3.61e-04&  5.89e-04&  5.43e-04& 6.64e-04 & 3.61e-04 &  4.29e-04\\
 $e_x$ && 2.17e-06&  3.64e-06 & 3.66e-08 &     4.95e-06 & 2.17e-06& -& 3.69e-08 &    2.07e-07\\
 $e_b$ && 3.15e-22&   1.40e-22 &  1.68e-18&   4.31e-21& 1.25e-22& 1.50e-12 & 1.96e-23 &  2.03e-24 \\
 $e_m$ & & 4.60e-06 &  5.17e-06 & 2.20e-06  &  5.57e-06 & 4.60e-06& 3.47e-05 &2.20e-06  &2.92e-06\\
 $e_n$ & &6.45e-04 &  6.96e-04 & 3.87e-04 & 7.28e-04  &6.45e-04& 1.76e-03 & 3.87e-04  & 4.75e-04\\
\bottomrule
\end{tabular}
\end{table}

\subsection{Experiment 3: nonlinear scalar stiff equation}
In this experiment we consider a simple model of flame propagation, 
\begin{equation*}
\begin{aligned}
y'(x)&=y^2(x)-y^3(x), \ x \in \left[0,\frac{2}{\delta}\right] \\
y(0)&=\delta,
\end{aligned}
\end{equation*}
where $\delta <<1.$ 
The exact solution is given by 
\begin{align*}
y(x)=\frac{1}{W(A{\rm e}^{A-x})+1},
\end{align*}
where $A=1/\delta-1$ and $W(t)$ is the so-called Lambert W function  \cite{W}. It solves the equation $W(t){\rm e}^{W(t)}=t.$ 
\begin{figure}[!h]
\centering
\begin{subfigure}[t]{0.49\textwidth}
\includegraphics[scale=0.56]{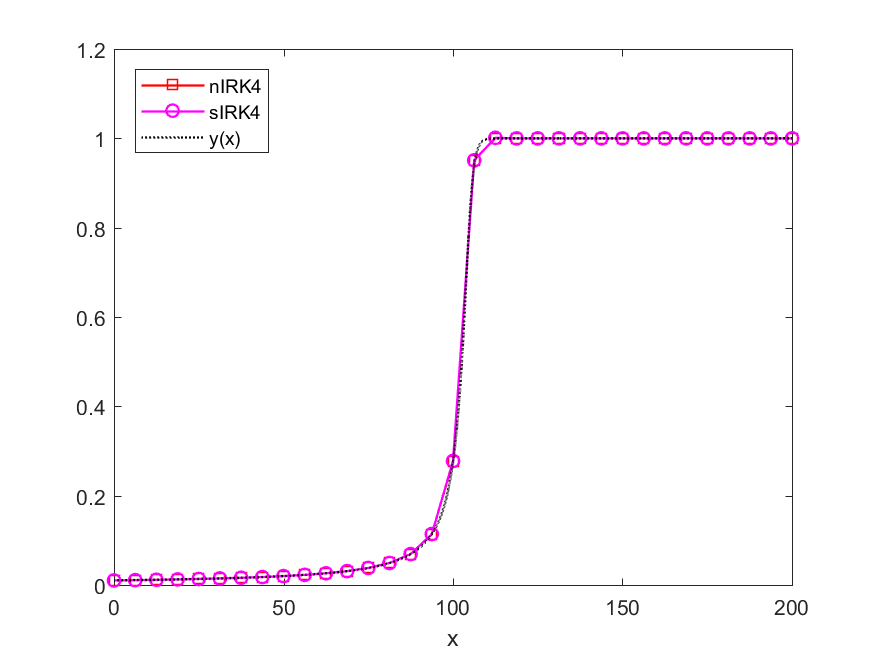}
\caption{numerical solutions for $N=32$ }\label{F-e3-sol}
\end{subfigure}
\begin{subfigure}[t]{0.49\textwidth}
\includegraphics[scale=0.56]{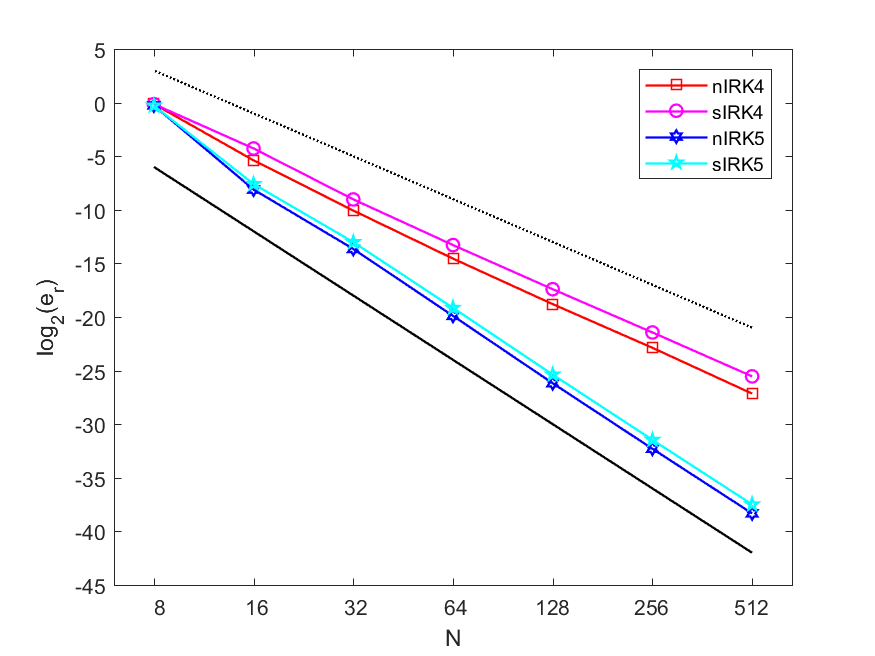}
\caption{relative errors; solid and dotted lines represent reference slopes $N^{-6}$ and $N^{-4},$ respectively}\label{F-e3-EOC}
\end{subfigure}
\caption{Experiment 3: numerical solutions and relative errors }\label{F-e3}
\end{figure}
We compute the above IVP by nIRK4, sIRK4 and nIRK5, sIRK5 for $\delta=0.01$.  Figure~\ref{F-e3-sol} depicts graphs of exact and numerical solutions computed for $N=32.$ The relative errors are plotted in Figure~\ref{F-e3-EOC}.   We can observe smaller relative errors of our newly derived IRK compared to standard collocation IRK. The EOC confirms theoretical order of convergence, $p=4$ for selected four stage  methods and $p=6$ for selected five stage methods.

\subsection{Experiment 4: Prothero--Robinson problem}

All newly derived schemes are A-stable which, in general, guarantees a stable numerical approximation.  The Prothero-Robinson example \cite{PRS} reveals the so-called {\em order reduction phenomenon for stiff problems} which is present for one-step methods. For instance, for fully implicit
Runge-Kutta methods like Gauss-Legendre methods, EOC decreases from $2s$ to $s$ (number of stages), cf. \cite{rangCS}. We refer the reader to \cite{HaWa,skvortsovA,rang,rangCS,PRS}, among others, for the role of stage order in solving stiff problems.

We thus proceed our experimental analysis with a linear Prothero-Robinson problem 
\begin{align*}
y'(x)&=\lambda (y(x)-\varphi(x))+\varphi'(x), \quad x\in [0,b], \ b >0\\
y(0)&=\varphi(0),
\end{align*}
   where $\lambda<<0$ and $\varphi$ is a given function and the exact solution at the same time. This problem  has been widely used to analyze and design numerical methods for solving stiff problems, see e.g. \cite{rang,skvortsovA}.
   \\
 Let us consider the first case:
   \begin{itemize}
   \item[\bf (4A)] $\lambda=-10^{6},$ $\varphi(x)=\sin\left(\frac{\pi}{4}+x\right),$ $y(0)=\varphi(0)=\frac{\sqrt{2}}{2},$ $x\in[0,15]$  
     \end{itemize}
To begin we compute the Prothero-Robinson problem with the parameters specified in (4A) for $N=2^k,$ $k=2,\ldots,9$.  Figure~\ref{F-e4A-all} shows the resulting relative errors (in discrete $L^2$-norm) and indicates the corresponding EOC for chosen methods. We can clearly observe that  nIRK5, sIRK5, Lobatto4, nIRK4, sIRK4, sIRK4o, nIRK3o, sIRK3o converge with EOC=4, while nIRK5c, nIRK4c and nIRK3oc converge with EOC=2.  Note that nIRK4o exhibit convergence of order 4 for large step sizes ($N<32$) and fail to converge for smaller step sizes. The worst results are obtained by nIRK4oc. This is in accordance with the theory since the stage order of this method is 0.  The convergence failure of nIRK4oc can be also seen in Figure~\ref{F-e4A-sol}, where  the graphs of approximations computed for $N=8$ by nIRK4oc and nIRK3o are plotted.
\begin{figure}
\centering
\begin{subfigure}[b]{0.49\textwidth}
\includegraphics[scale=0.56]{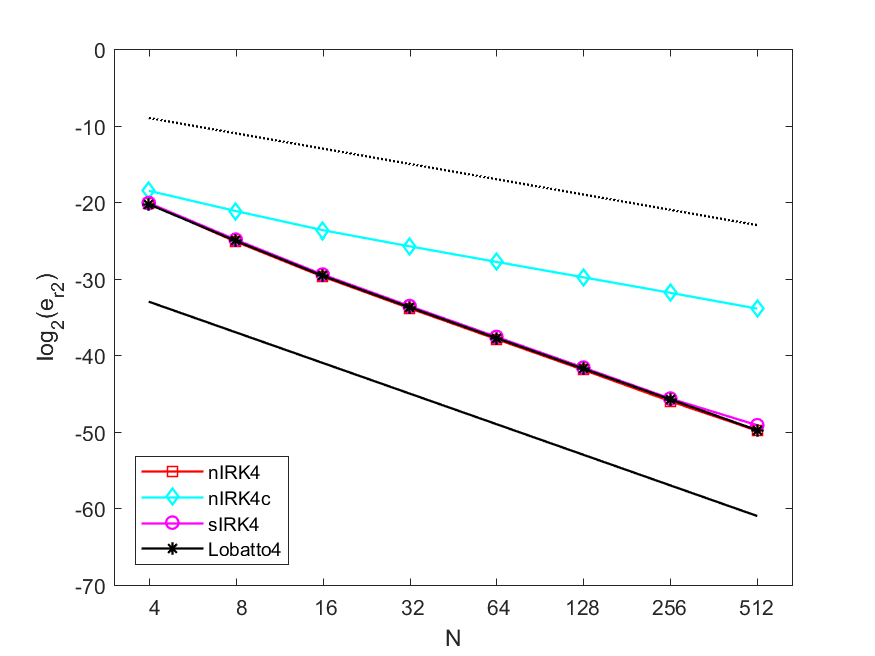}
\caption{``closed'' four stage methods}\label{F-e4A-4c}
\end{subfigure}
\begin{subfigure}[b]{0.49\textwidth}
\includegraphics[scale=0.56]{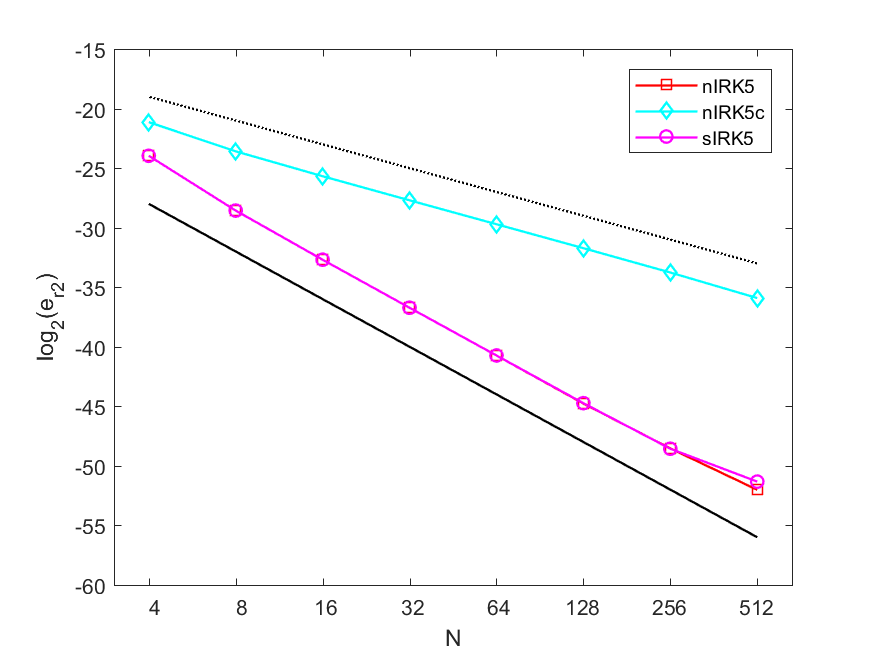}
\caption{``closed'' five stage methods}\label{F-e4A-5}
\end{subfigure}
\begin{subfigure}[b]{0.49\textwidth}
\includegraphics[scale=0.56]{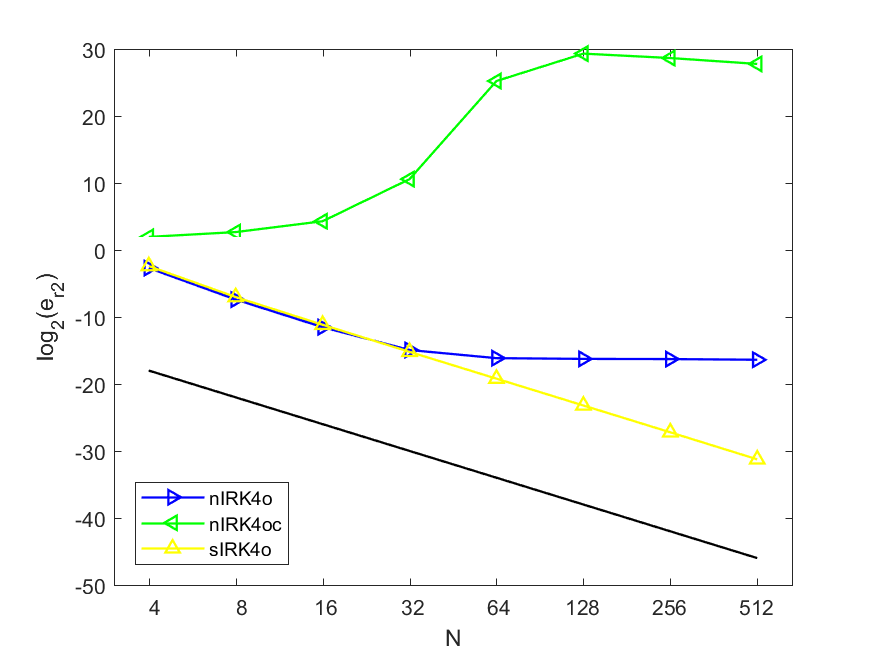}
\caption{``open'' four stage methods}\label{F-e4A-4o}
\end{subfigure}
\begin{subfigure}[b]{0.49\textwidth}
\includegraphics[scale=0.56]{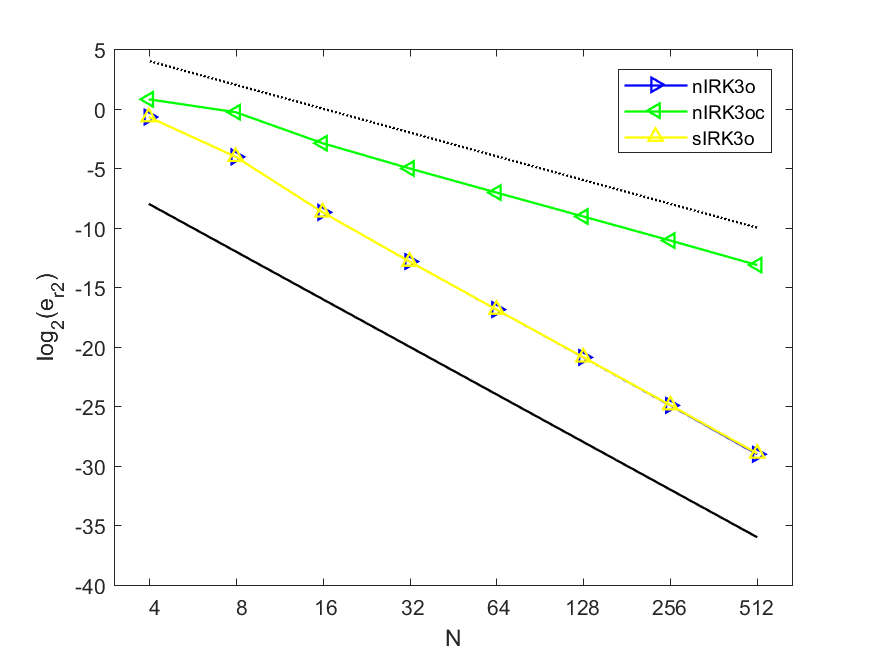}
\caption{``open'' three stage methods}\label{F-e4A}
\end{subfigure}
\caption{Experiment (4A): relative errors; solid and dotted lines represent the reference slopes $N^{-4}$ and $N^{-2}$, respectively}\label{F-e4A-all}
\end{figure}
We further plot the dependence of EOC on the stiffness parameter $-\lambda$ ranging from $1$ to $10^6.$  See Figure~\ref{F-e4A-EOC}. In the case of five stage methods we observe the  order reduction phenomenon with EOC dcreasing from 6 to 4 for nIRK5, sIRK5, the same occurs for four stage Lobatto4, and order reduction from 6 to 2 is obvious for nIRK5c. 
The same phenomenon is present for nIRK4c and nIRK3oc for which EOC decreases from 4 to 2. We can conclude that nIRK4, sIRK4, nIRK3o, sIRK3o converge with EOC $\approx$ 4 for all given values of $\lambda.$
\begin{figure}
\centering
\begin{subfigure}[b]{0.49\textwidth}
\includegraphics[scale=0.56]{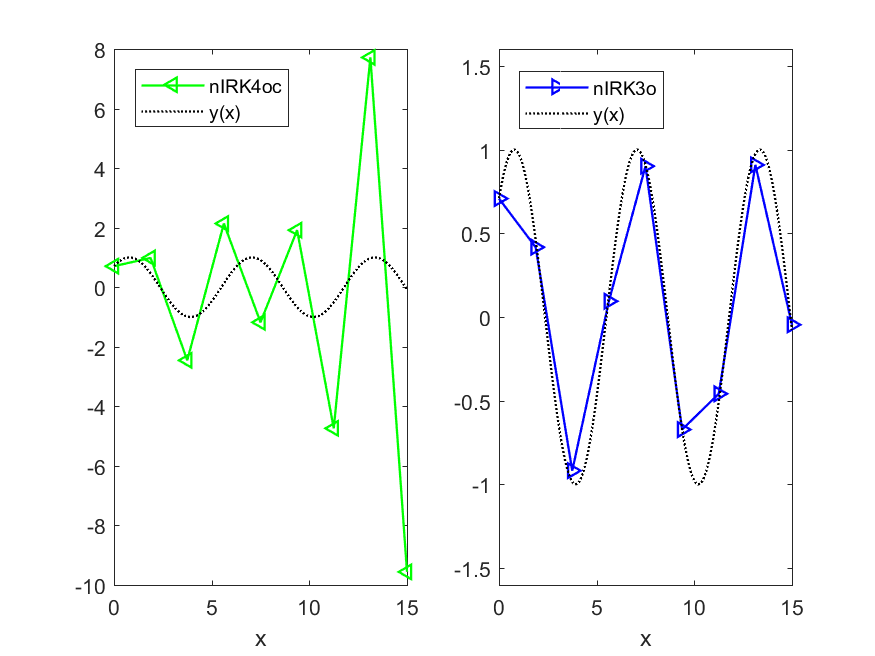}
\caption{numerical solutions for $N=8$}\label{F-e4A-sol}
\end{subfigure}
\begin{subfigure}[b]{0.49\textwidth}
\includegraphics[scale=0.56]{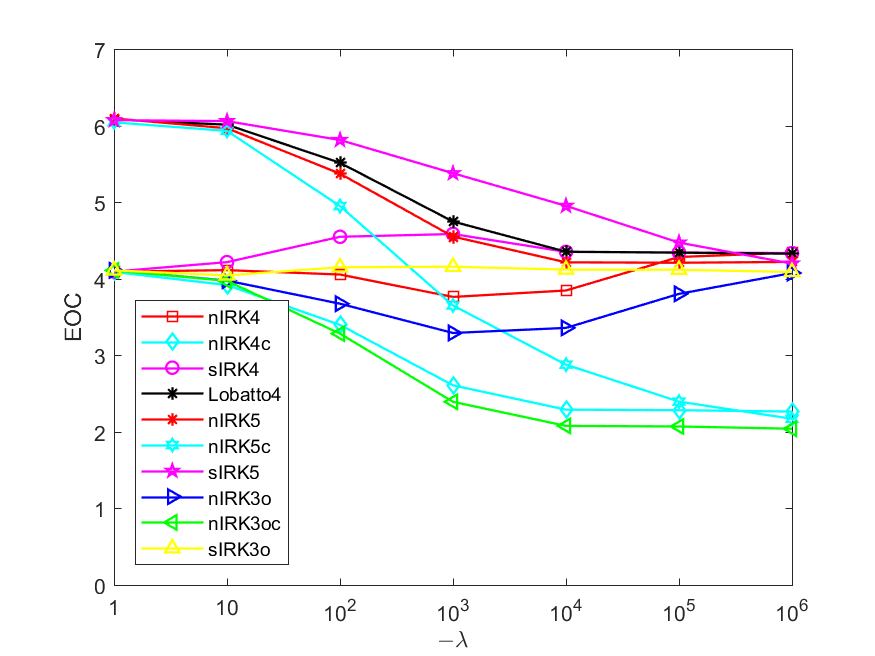}
\caption{EOC vs. stiffness parameter $-\lambda$}\label{F-e4A-EOC}
\end{subfigure}
\caption{Experiment (4A): numerical solutions and stiffness influence on EOC}\label{F-e4A}

\end{figure}

A more general form of the Prothero-Robinson problem as considered in \cite{S} prescribes the initial value $y(0)=y_0\neq \varphi(0)$ and possesses the exact solution  $y(x)=\varphi(x)+(y_0-\varphi(0))\mathrm{e}^{\lambda x}.$  The second case we consider complies with the latter form:
    \begin{itemize}
   \item[\bf (4B)] $\lambda=-200,$ $\varphi(x)=10-(10+x)\mathrm{e}^{-x},$ $y_0=10,$ $x\in[0,15].$
   \end{itemize}
Figure~\ref{F-e4B-sol} depicts the graphs of approximations to the  Prothero-Robinson problem specified in (4B) computed by selected four and five stage IRK with the step size $h=0.2.$ Note in particular, that for $h=0.2$ we have obtained the same approximate  solution  computed by Lobatto4 and nIRK4. The relative errors for ``closed'' five stage IRK are plotted in Figure~\ref{F-e4B-EOC}. It indicates EOC=6 for nIRK5c and sIRK5 while nIRK5 converges with EOC=8. 
\begin{figure}
\centering
\begin{subfigure}[t]{0.49\textwidth}
\includegraphics[scale=0.56]{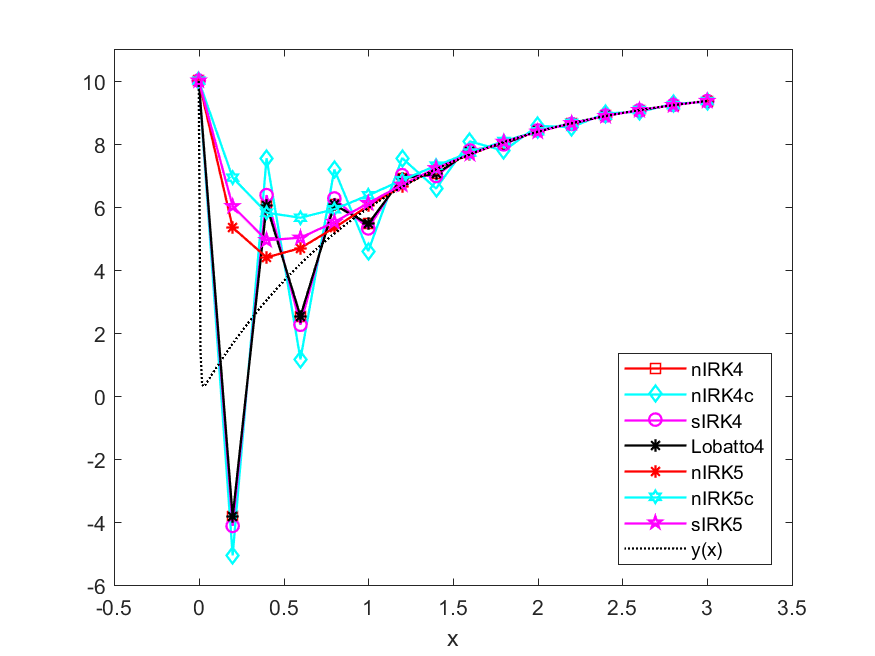}
\caption{numerical solutions for $h=0.2$}\label{F-e4B-sol}
\end{subfigure}
\begin{subfigure}[t]{0.49\textwidth}
\includegraphics[scale=0.56]{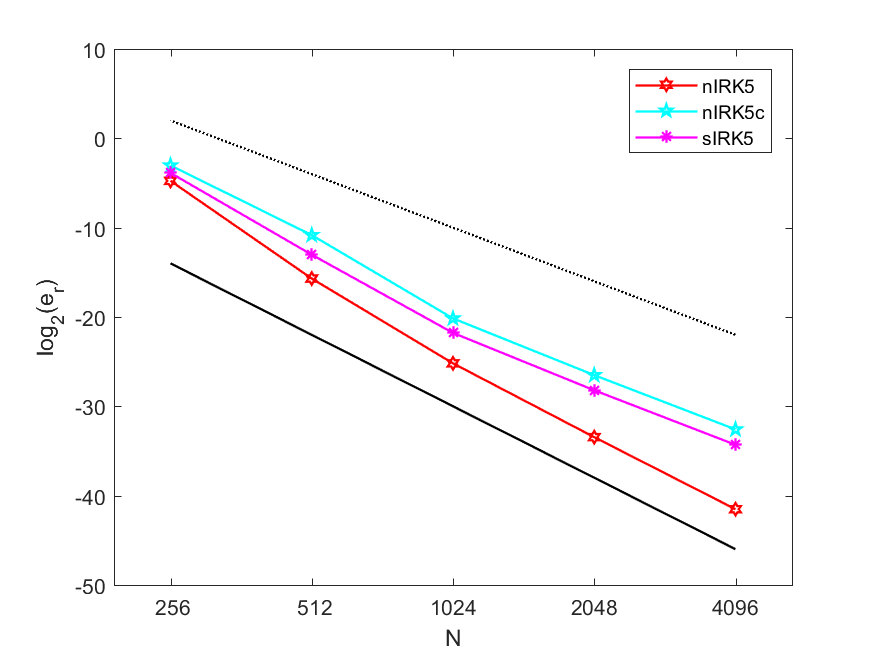}
\caption{relative errors; solid and dotted lines represent reference slopes $N^{-8}$ and $N^{-6},$ respectively}\label{F-e4B-EOC}
\end{subfigure}
\caption{Experiment (4B):  numerical solutions and relative errors}  \label{T-e4B}
\end{figure}

\subsection{Experiment 5: Kap's system}
In the fifth experiment we consider a two-dimensional  nonlinear stiff system, namely Kap's system, arising in chemical kinetics of reactions,  
\begin{align*}
y_1'(x)&=-(\mu+2)y_1+\mu y_2^2, \ x \in [0,1]\\
y_2'(x)&= y_1-y_2-y_2^2, \\
y_1(0)&=1\\
y_2(0)&=1.
\end{align*}
Its exact solution $y(x)=[y_1(x),y_2(x)]=[\mathrm{e}^{-2x},\mathrm{e}^{-x}]$ is independent of the stiffness parameter $\mu.$ For the numerical analysis of the latter system we refer the reader to, for instance, \cite{akanbi,highly,skvortsovKap,skvortsovA} and the references therein. 
We compute numerical solutions for $N=15$ and $\mu=10^k,$ $k=1,\ldots,5$ by both ``closed'' and ``open'' four stage methods. We define the maximal relative errors for a two-dimensional problem,
\begin{align*}
\varepsilon_r:=\frac{\underset{n=1,\ldots,N}{\max}\sqrt{|y_1(x_n)-y_{1,n}|^2+|y_2(x_n)-y_{2,n}|^2}}{\underset{n=1,\ldots,N}{\max}\sqrt{|y_1(x_n)|^2+|y_2(x_n)|^2}},
\end{align*}
and list the corresponding values in Table~\ref{T-e5}. It seems that only ``closed'' four stage methods result in a suitable approximation of Kap's system as given above. Note, in particular that both nIRK4 and Lobatto4 yield very similar relative errors. Keep in mind their theoretical convergence orders are 4 and 6, respectively. 
\begin{table}[!ht]
\centering
\caption{Experiment 5:  maximal relative errors for different values of $\mu$ and $N=15$} \label{T-e5}
\begin{tabular}{rrc||c||c||c||c}
\toprule
&&\multicolumn{5}{c}{$\mu$} \\
\cline{3-7}
 & &  $10^1$ & $10^2$ &$10^3$ & $10^4$ &$10^5$   \\
\cline{3-3} \cline{4-4}\cline{5-5}\cline{6-6}\cline{7-7}  
{nIRK3o} & & 0.4181   &     10056    &   885.6     &  9305.7     &  9325.7\\
{sIRK3o}& & 0.4374  &     5.9732    &   2.6611 &      19.206       & 19.083\\
{nIRK4} & & 1.5238e-08  & 1.4108e-08 &  4.3893e-11  & 6.1826e-11 &  7.0361e-12\\
{sIRK4}&  & 5.1496e-09  &  2.336e-09  & 1.1911e-09  & 1.0871e-09 &  1.0583e-09\\
{Lobatto4}& & 2.0937e-10   &1.4140e-09 &  5.4434e-10  & 6.4220e-11 &  6.5949e-12\\
nIRK5 & &      0.8771 &      1.2711   &    1.6602      & 1.6206     &  1.6142\\
sIRK5 &&      0.8760  &     1.2836   &    1.6415      & 1.5592    &   1.5479\\
\bottomrule
\end{tabular}
\end{table}
 In Figure~\ref{F-e5-sol} we plot the graph of numerical solution computed by nIRK4 for $\mu=1000$ and $N=15.$ The  relative errors for $\mu=1000$ and different values of $N$ obtained by ``closed'' four stage methods are depicted in Figure~\ref{F-e5-EOC}. We can observe EOC between 3 and 4 for nIRK4 and sIRK4 while Lobatto4 fails to  converge for larger values of $N$.
\begin{figure}
\centering
\begin{subfigure}[t]{0.49\textwidth}
\includegraphics[scale=0.56]{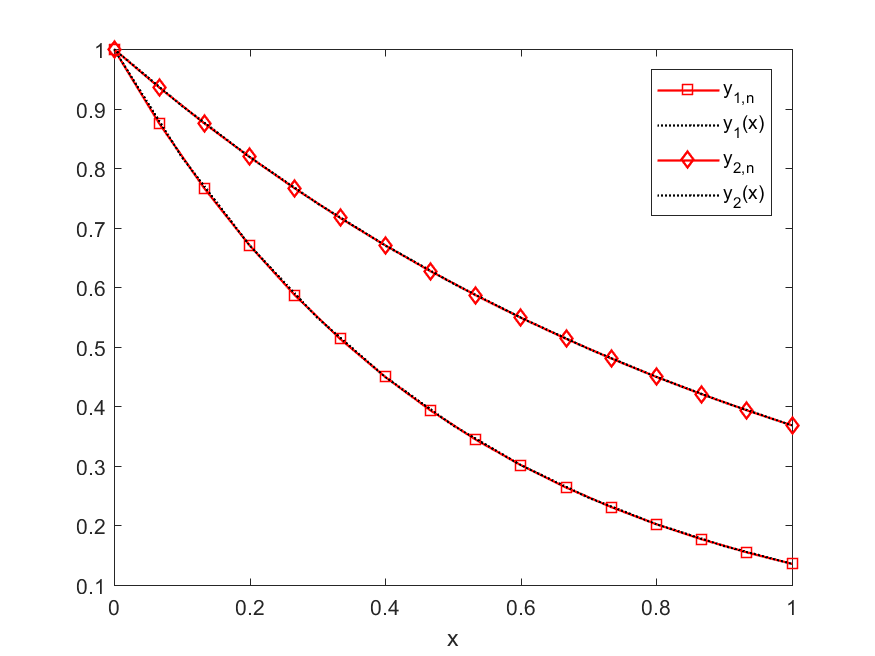}
\caption{numerical solution by nIRK4 for $N=15$}\label{F-e5-sol}
\end{subfigure}
\begin{subfigure}[t]{0.49\textwidth}
\includegraphics[scale=0.56]{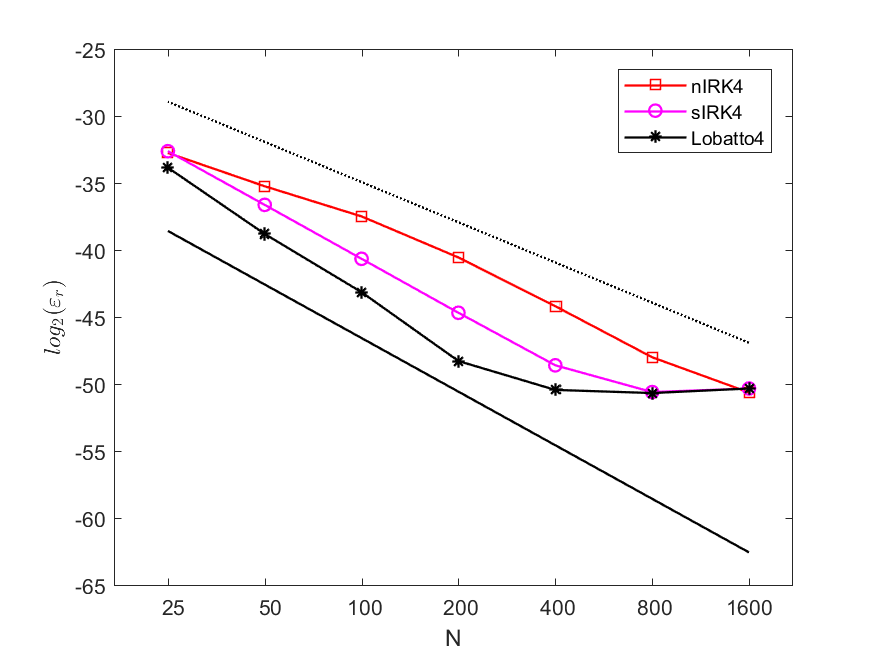}
\caption{relative errors; solid and dotted lines represent reference slope $N^{-4}$ and $N^{-3},$ respectively}\label{F-e5-EOC}
\end{subfigure}
\caption{Experiment 5: numerical solution and relative errors for $\mu=1000$}\label{F-e5}
\end{figure}

\section*{Conclusions}
We have proposed a new way of deriving coefficients of implicit Runge-Kutta methods used for numerical  approximation of initial value problems \eqref{ivp}. Instead of standard collocation approach or the general approach based on simplifying order conditions as described in Subsection~\ref{SS-RK} we have imposed  identities for  the  moments $(\xnn-x)^ky'(x)=(\xnn-x)^kf(x,y(x)),$ $k=0,1,\ldots,s$. The number of resulting equations containing repeated integrals depends on the number of stages $s$ and the numerical quadrature being used, see Section~\ref{S-new}.
We have derived several Butcher's tableaux based on closed and open modified Newton-Cotes quadrature formulas with and without employing Cauchy's repeated integration formula, as well as  based on Gauss-Legendre, -Radau and -Lobatto quadrature formulas. Various numerical experiments including stiff initial value problems for selected newly derived implicit Runge-Kutta methods were presented, among them the well-known Prothero-Robinson problem. We can conclude that the performance of newly derived methods in  terms of the order of convergence and relative errors was at least comparable or slightly better than that of the standard collocation implicit Runge-Kutta methods with the same number of stages.  Especially, recall that nIRKs for linear IVP  in Experiment 1 converged with EOC $=2s-2$  which is typical for Lobatto methods. While nIRK2 and nIRK3 indeed coincide with Lobatto IIIA methods, nIRK4 and nIRK5 {\it do not} belong to the family of Lobatto methods. 
Note in particular, that the performance in terms of relative errors and EOC of nIRK4  in the presented experiments was at least as good as or better than the performance of LobattoIIIA with the same number of stages and computational costs (Lobatto4) while having the theoretical convergence order 4 as opposed to 6 of Lobatto4. 

\subsubsection*{Acknowledgements}
The work of Hana Mizerov\' a was supported in part by the Slovak Research
and Development Agency under the contract No. APVV-23-0039 and by the grants VEGA 1/0084/23 and VEGA 1/0709/24. The work of Katar\' ina Tvrd\' a was supported by the grants KEGA 030STU-4/2023 and VEGA 1/0155/23.

\appendix
\setcounter{table}{0}
\renewcommand{\thetable}{A\arabic{table}}
\renewcommand{\thesubsubsection}{A\arabic{subsubsection}}
\section*{Appendix}
We provide the list of newly derived  as well as of some well-known implicit Runge-Kutta methods. The tables have the following columns: name/type of method, Butcher's tableau, theoretical order of convergence and values of $p,$ $q,$ $r$ corresponding to simplifying order conditions \eqref{sor}  satisfied by the underlying methods.

\subsubsection{List of new IRK  based on closed Newton-Cotes formulas}

Table~\ref{T-NCclosed} contains new IRK  derived by modified closed Newton-Cotes formulas, cf. Subsection~\ref{SS-cNC} for derivation of nIRK$s$ and nIRK$s$c. 
There is an additional column with the corresponding stability functions $R(z)$ in Table~\ref{T-NCclosed}.
\begin{table}[!ht]
\centering
\caption{nIRK$s$ and nIRK$s$c based on closed Newton-Cotes formulas for repeated integrals}\label{T-NCclosed}
\begin{tabular}[b]{cclccc}
\toprule
name & $s$ &  Butcher's tableau & $p$  & $p$ $q$ $r$ & stability function\\
\midrule
 nIRK4 & 4 & \parbox[c]{0.4\hsize}{
$\begin{array}[b]{c|cccc}
0 & 0 & 0 & 0 & 0 \\
\frac 1 3 & \frac{47}{360} & \frac{89}{360} & -\frac{19}{360} & \frac{1}{120} \\
\frac 2 3 & \frac{7}{60} & \frac{77}{180} & \frac{23}{180} & -\frac{1}{180} \\
1 & \frac 1 8 & \frac 3 8 & \frac 3 8 & \frac 1 8 \\
\hline 
 & \frac 1 8 & \frac 3 8 & \frac 3 8 & \frac 1 8\\
\end{array}
$}
& 4 & 4 3 0 & $\displaystyle -\frac{\frac{z^3}{120} + \frac{z^2}{10} + \frac{z}{2} + 1}{\frac{z^3}{120} - \frac{z^2}{10} + \frac{z}{2} - 1}$ \\
\midrule
 nIRK4c & 4 & \parbox[c]{0.4\hsize}{
$\begin{array}[b]{c|cccc}
0 & 0 & 0 & 0 & 0 \\
\frac 1 3 & \frac{1}{24} & \frac{5}{24} & -\frac{1}{4} & \frac{1}{24} \\
\frac 2 3 & \frac{1}{12} & \frac{5}{12} & \frac{1}{4} & -\frac{1}{12} \\
1 & \frac 1 8 & \frac 3 8 & \frac 3 8 & \frac 1 8 \\
\hline 
 & \frac 1 8 & \frac 3 8 & \frac 3 8 & \frac 1 8\\
\end{array}
$}
& 4 & 4 2 2 & $\displaystyle -\frac{\frac{z^3}{72} +\frac{z^2}{9} +\frac{z}{2} +1}{\frac{z^3}{72} - \frac{z^2}{9} + \frac{z}{2} - 1}$ \\
\midrule
 nIRK5 &  5 & \parbox[c]{0.4\hsize}{
$\begin{array}[b]{c|ccccc}
0 & 0 & 0 & 0 & 0&0 \\
\frac 1 4 & \frac{200}{2183}    &   \frac{429}{2078}  &    -\frac{109}{1680}    &   \frac{191}{10080} &     -\frac{43}{20160} \\
\frac 1 2 &  \frac{29}{360}     &   \frac{31}{90}    &       \frac{1}{15}    &       \frac{1}{90}    &      -\frac{1}{360}  \\
\frac 3 4 &    \frac{179}{2240}   &  \frac{377}{1120}   &   \frac{111}{560}   &     \frac{167}{1120}   &    -\frac{31}{2240} \\ 
1 & \frac{7}{90}       &   \frac{16}{45 }     &     \frac{2}{15}      &   \frac{16}{45}     &   \frac{7}{90} \\
\hline 
 &\frac{7}{90}       &   \frac{16}{45 }     &     \frac{2}{15}      &   \frac{16}{45}     &   \frac{7}{90} 
\end{array}
$}
& 6 & 6 4 1 & $\displaystyle \frac{\frac{z^4}{1680} + \frac{z^3}{84} + \frac{3z^2}{28} + \frac{z}{2} + 1}{\frac{z^4}{1680} - \frac{z^3}{84} + \frac{3z^2}{28} - \frac{z}{2} + 1}$ \\
\midrule
 nIRK5c &  5 & \parbox[c]{0.4\hsize}{
$\begin{array}[b]{c|ccccc}
0 & 0 & 0 & 0 & 0&0 \\
\frac 1 4 & \frac{371}{2880}    &   \frac{79}{720}  &    \frac{1}{480}    &   \frac{19}{720} &     -\frac{49}{2880} \\
\frac 1 2 &  -\frac{7}{120}     &   \frac{28}{45}    &       \frac{1}{15}    &   -    \frac{14}{15}    &   \frac{49}{360}  \\
\frac 3 4 &    \frac{91}{960}   &  \frac{79}{240}   &   \frac{21}{160}   &     \frac{59}{240}   &    -\frac{49}{960} \\ 
1 & \frac{7}{90}       &   \frac{16}{45 }     &     \frac{2}{15}      &   \frac{16}{45}     &   \frac{7}{90} \\
\hline 
 &\frac{7}{90}       &   \frac{16}{45 }     &     \frac{2}{15}      &   \frac{16}{45}     &   \frac{7}{90} 
\end{array}
$}
& 6 & 6 3 3 & $\displaystyle \frac{\frac{7z^4}{5760} + \frac{z^3}{64} + \frac{11z^2}{96} + \frac{z}{2} + 1}{\frac{7z^4}{5760} - \frac{z^3}{64} + \frac{11z^2}{96} - \frac{z}{2} + 1}$ \\
\bottomrule
\end{tabular}
\end{table}

\subsubsection{List of new IRK based on open Newton-Cotes formulas}

Table~\ref{T-NCopen} contains new IRK derived by modified open Newton-Cotes formulas, cf. Subsection~\ref{SS-oNC} for derivation of nIRK$s$o and nIRK$s$oc. 
\begin{table}[!ht]
\caption{nIRK$s$o and nIRK$s$oc based on open Newton-Cotes formulas for repeated integrals}\label{T-NCopen}
\centering
\begin{tabular}[b]{cclcc}
\toprule
name & $s$ & Butcher's tableau & $p$ & $p$ $q$ $r$ \\
\midrule
nIRK3o & 3 & \parbox[c]{0.65\hsize}{
$\begin{array}[b]{c|ccc}
\frac 1 4& \frac{101}{240} & -\frac{13}{60} & \frac{11}{240}  \\
\frac 1 2 & \frac{7}{12} & -\frac{1}{6}  & \frac{1}{12}     \\
\frac 3 4 & \frac{149}{240}  &   -\frac{7}{60}  & \frac{59}{240}          \\
\hline 
&\frac{2}{3}    &      -\frac{1}{3} &       \frac{2}{3} \\
\end{array}
$} & 4 & 4 2 1 \\
\midrule    
nIRK3oc & 3 & \parbox[c]{0.65\hsize}{
$\begin{array}[b]{c|ccc}
\frac 1 4& \frac{3}{16} & -\frac{1}{24} & \frac{5}{48}  \\
\frac 1 2 & 0 & -\frac{1}{6}  & \frac{2}{3}     \\
\frac 3 4 & \frac{9}{16}  &   -\frac{7}{24}  & \frac{23}{48}          \\
\hline 
&\frac{2}{3}    &      -\frac{1}{3} &       \frac{2}{3} \\
\end{array}
$} & 4 & 4 1 3 \\
\midrule
nIRK4o & 4 & \parbox[c]{0.65\hsize}{
$\begin{array}[b]{c|cccc}
\frac 1 5& \frac{1340}{3131} &       -\frac{538}{1343} & \frac{73}{336} & -\frac{6677}{149565}  \\
\frac 2 5&  \frac{1061}{1927} &     -\frac{823}{2137} & \frac{107}{336} & -\frac{47}{560}       \\
\frac 3 5 & \frac{571}{1053} &       -\frac{31}{112} & \frac{239}{560} &   -\frac{31}{336}     \\
\frac 4 5& \frac{761}{1513}   &  -\frac{2442}{13907} & \frac{743}{1680} &    \frac{902}{29713} \\ 
\hline 
  &\frac{11}{24} & \frac{1}{24} & \frac{1}{24} & \frac{11}{24}  \\
\end{array}
$}& 4 & 4 3 0 \\
\midrule
nIRK4oc & 4 & \parbox[c]{0.65\hsize}{
$\begin{array}[b]{c|cccc}
\frac 1 5& \frac{719}{1680} & -\frac{673}{168} & \frac{929}{4276} & -\frac{5}{112}  \\
\frac 2 5&  \frac{185}{336} &     -\frac{647}{1680} & \frac{485}{1523} & -\frac{47}{560}       \\
\frac 3 5 & \frac{911}{1680} &       -\frac{31}{112} & \frac{376}{881} &   -\frac{31}{336}     \\
\frac 4 5& \frac{169}{336}   & - \frac{59}{336} & \frac{789}{1784} &    \frac{17}{560} \\ 
\hline 
  &\frac{11}{24} & \frac{1}{24} & \frac{1}{24} & \frac{11}{24}  \\
\end{array}
$}
& 4 &4 0 3 \\
\bottomrule
\end{tabular}
\end{table}

\subsubsection{List of Gauss-Legendre, Radau and Lobatto IIIA methods}

Table~\ref{T-RL} contains Gauss-Legendre and Radau methods with $s=2,3$ stages, and  Lobatto IIIA  methods with $s=2,3,4,5$ stages. The first column contains the well-known name and the acronym of newly proposed approach which results in the same Butcher's tableau. See Subsection~\ref{SS-ngq}.
\begin{table}[!ht]
\caption{Gauss-Legendre, Radau and Lobatto IIIA methods}\label{T-RL}
\centering
\begin{tabular}[b]{cclcc}
\toprule
name  & $s$  &  Butcher's tableau & $p$  & $p$ $q$ $r$ \\
\midrule
 \parbox[c]{0.15\hsize}{\centering Gauss-Legendre \\
nIRK-G2 }  & 2 & \parbox[c]{0.65\hsize}{
$\begin{array}[b]{c|cc}
\frac{3-\sqrt{3}}{6} & \frac{1}{4}& \frac{1}{4}-\frac{\sqrt{3}}{6}   \\
\frac{3+\sqrt{3}}{6} &    \frac{1}{4}+\frac{\sqrt{3}}{6}   &\frac{1}{4}   \\ 
 \hline   
 & \frac{1}{2} & \frac{1}{2} \\
\end{array}
$}
& 4 & 4 2 2\\
\midrule
 \parbox[c]{0.15\hsize}{\centering Gauss-Legendre \\
nIRK-G3 }  & 3 & \parbox[c]{0.65\hsize}{
$\begin{array}[b]{c|ccc}
\frac{5-\sqrt{15}}{10} & \frac{5}{36} & \frac{2}{9}-\frac{\sqrt{15}}{15}   &\frac{5}{36}-\frac{\sqrt{15}}{30}    \\
\frac 1 2&   \frac{5}{36}+\frac{\sqrt{15}}{24}   &\frac{2}{9}&  \frac{5}{36}-\frac{\sqrt{15}}{24}    \\   
\frac{5+\sqrt{15}}{10}&  \frac{5}{36}+\frac{\sqrt{15}}{30}    &\frac{2}{9}+\frac{\sqrt{15}}{15}   & \frac{5}{36} \\
 \hline   
 & \frac{5}{18} & \frac{4}{9} & \frac{15}{18}\\
\end{array}
$}
& 6 & 6 3 3 \\
\midrule
 \parbox[c]{0.15\hsize}{\centering Radau I\\
nIRK-RI2 }  & 2 & \parbox[c]{0.65\hsize}{
$\begin{array}[b]{c|cc}
0& 0   &           0      \\
\frac{2}{3} &   \frac{1}{3} & \frac{1}{3} \\
\hline
& \frac 1 4 & \frac 3 4\\
\end{array}
$}
& 3 & 3 2 1 \\
\midrule
 \parbox[c]{0.15\hsize}{\centering Radau I\\
nIRK-RI3 }  & 3 & \parbox[c]{0.65\hsize}{
$\begin{array}[b]{c|ccc}
0& 0   &           0    & 0   \\
\frac{6-\sqrt{6}}{10} &   \frac{9+\sqrt{6}}{75} & \frac{24+\sqrt{6}}{120} &   \frac{168-73\sqrt{6}}{600}\\   
 \frac{6+\sqrt{6}}{10}&  \frac{9-\sqrt{6}}{75} & \frac{168+73\sqrt{6}}{600} & \frac{24-\sqrt{6}}{120}  \\
 \hline   
& \frac 1 9 & \frac{16+\sqrt{6}}{36} & \frac{16-\sqrt{6}}{36} \\
\end{array}
$}
& 5 & 5 3 2 \\
\midrule
 \parbox[c]{0.15\hsize}{\centering Radau IIA\\
nIRK-RII2 }  & 2 & \parbox[c]{0.65\hsize}{
$\begin{array}[b]{c|cc}
\frac{1}{3} &   \frac{5}{12} & -\frac{1}{12} \\
1 & \frac 3 4 & \frac 1 4     \\
\hline
& \frac 3 4 & \frac 1 4 \\
\end{array}
$}
& 3 & 3 2 1 \\
\midrule
 \parbox[c]{0.15\hsize}{\centering Radau IIA\\
nIRK-RII3 }  & 3 & \parbox[c]{0.65\hsize}{
$\begin{array}[b]{c|ccc}
\frac{4-\sqrt{6}}{10} & \frac{88-7\sqrt{6}}{360} & \frac{296-169\sqrt{6}}{1800}   &\frac{-2+3\sqrt{6}}{225}  \\
\frac{4+\sqrt{6}}{10}&   \frac{296+169\sqrt{6}}{1800} &\frac{88+7\sqrt{6}}{360}&   \frac{-2-3\sqrt{6}}{225} \\   
1&   \frac{16-\sqrt{6}}{36} & \frac{16+\sqrt{6}}{36} & \frac 1 9  \\
 \hline   
 & \frac{16-\sqrt{6}}{36} & \frac{16+\sqrt{6}}{36} & \frac 1 9\\
\end{array}
$}
& 5 & 5 3 2 \\
\midrule
\parbox[c]{0.15\hsize}{\centering 
Lobato IIIA\\
nIRK2\\
nIRK2c \\
sIRK2} & 2 &  \parbox[c]{0.65\hsize}{
$\begin{array}[b]{c|cc}
0& 0   &           0       \\
 1&    \frac 1 2    & \frac 1 2    \\
 \hline   
& \frac 1 2    & \frac 1 2     \\
\end{array}
$}
& 2 & 2 2 0 \\
\midrule
 \parbox[c]{0.15\hsize}{\centering Lobato IIIA\\
nIRK3 \\
nIRK3c\\
nIRK-L3 \\
sIRK3}  & 3 & \parbox[c]{0.65\hsize}{
$\begin{array}[b]{c|ccc}
0& 0   &           0    & 0   \\
\frac 1 2&   \frac{5}{24} &\frac 1 3 &   -\frac{1}{24}\\   
 1&   \frac 1 6 & \frac 2 3 & \frac 1 6  \\
 \hline   
&  \frac 1 6 & \frac 2 3 & \frac 1 6      \\
\end{array}
$}
& 4 & 4 3 1 \\
\midrule
 \parbox[c]{0.15\hsize}{\centering Lobato IIIA\\
nIRK-L4 \\}  & 4 & \parbox[c]{0.65\hsize}{
$\begin{array}[b]{c|cccc}
0& 0   &           0    & 0   &0 \\
\frac 1 2 - \frac{\sqrt{5}}{10}&   \frac{11+\sqrt{5}}{120}& \frac{25-\sqrt{5}}{120}&    \frac{25-13\sqrt{5}}{120} &  \frac{-1+\sqrt{5}}{120}\\   
\frac 1 2 + \frac{\sqrt{5}}{10} &   \frac{11-\sqrt{5}}{120}& \frac{25+13\sqrt{5}}{120}&    \frac{25+\sqrt{5}}{120} &  \frac{-1-\sqrt{5}}{120}\\ 
 1&   \frac{1}{12} & \frac{5}{12} & \frac{5}{12} & \frac{1}{12}\\
 \hline   
&  \frac{1}{12} & \frac{5}{12} & \frac{5}{12} & \frac{1}{12} \\
\end{array}
$}
& 6 & 6 4 2 \\
\midrule
 \parbox[c]{0.15\hsize}{\centering Lobato IIIA\\
nIRK-L5 \\}  & 5 & \parbox[c]{0.65\hsize}{
$\begin{array}[b]{c|ccccc}
0& 0   &           0    & 0   &0  &0\\
\frac 1 2 - \frac{\sqrt{21}}{14}&   \frac{119+3\sqrt{21}}{1960}& \frac{343-9\sqrt{21}}{2520}&    \frac{392-96\sqrt{21}}{2205} &  \frac{343-69\sqrt{21}}{2520} & \frac{-21+3\sqrt{21}}{1960}\\   
\frac 1 2 &   \frac{13}{320}& \frac{392+105\sqrt{21}}{2880}&    \frac{8}{45} &  \frac{392-105\sqrt{21}}{2880} & \frac{3}{320}\\ 
\frac 1 2 + \frac{\sqrt{21}}{14} & \frac{119-3\sqrt{21}}{1960}& \frac{343+69\sqrt{21}}{2520}&    \frac{392+96\sqrt{21}}{2205} &  \frac{343+9\sqrt{21}}{2520} & \frac{-21-3\sqrt{21}}{1960}\\ 
 1&   \frac{1}{20} &  \frac{49}{180} & \frac{16}{45} &\frac{49}{180} &\frac{1}{20}\\
 \hline   
&   \frac{1}{20} & \frac{49}{180} & \frac{16}{45} &\frac{49}{180} &\frac{1}{20} \\
\end{array}
$}
& 8 & 8 5 3 \\
\bottomrule
\end{tabular}
\end{table}

\subsubsection{List of standard collocation IRK}

Table~\ref{T-sIRK}, for completeness, contains the list of standard collocation IRK which were used in our numerical experiments. For the derivation of these methods see Subsubsection~\ref{SSS-sc}. 
\begin{table}[!ht]
\caption{standard collocation methods sIRK$s$o and sIRK$s$}\label{T-sIRK}
\centering
\begin{tabular}[b]{cclcc}
\toprule
name  & $s$  &  Butcher's tableau & $p$  & $p$ $q$ $r$ \\
\midrule
sIRK3o  & 3& \parbox[c]{0.65\hsize}{
$\begin{array}[b]{c|ccc}
\frac{1}{4} &   \frac{23}{48} & -\frac{1}{3} &   \frac{5}{48}\\   
 \frac{1}{2}&  \frac{7}{12} & -\frac{1}{6} & \frac{1}{12}\\  
\frac 3 4 & \frac{9}{16} & 0 & \frac{3}{16}\\
\hline
& \frac  2 3 & -\frac 1 3 & \frac  2 3 \\
\end{array}
$}
& 4 & 4 3 1 \\
\midrule
sIRK4o  & 4& \parbox[c]{0.65\hsize}{
$\begin{array}[b]{c|cccc}
\frac{1}{5} &   \frac{11}{24} & -\frac{59}{120} &   \frac{37}{120} & -\frac{3}{40}\\   
 \frac{2}{5}&  \frac{8}{15} & -\frac{1}{3} & \frac{4}{15} & -\frac{1}{15}\\  
\frac 3 5 & \frac{21}{40} & -\frac{9}{40} & \frac{3}{8} &-\frac{30}{40}\\
\frac 4 5 & \frac{8}{15} & -\frac{4}{15}& \frac{8}{15} & 0\\
\hline
& \frac{11}{24} & \frac{1}{24} & \frac{1}{24} & \frac{11}{24}\\
\end{array}
$}
& 4 & 4 4 0 \\
\midrule
sIRK4  & 4& \parbox[c]{0.65\hsize}{
$\begin{array}[b]{c|cccc}
0 &   0 & 0 & 0 & 0 \\   
 \frac{1}{3}&  \frac{1}{8} & \frac{19}{72} & -\frac{5}{72} & \frac{1}{72}\\  
\frac 2 3 & \frac{1}{9} & \frac{4}{9} & \frac{1}{9} &0\\
1 & \frac{1}{8} & \frac{3}{8}& \frac{3}{8} & \frac{1}{8}\\
\hline
& \frac{11}{8} & \frac{3}{8} & \frac{3}{8} & \frac{1}{8}\\
\end{array}
$}
& 4 & 4 4 0 \\
\midrule
sIRK5  & 5& \parbox[c]{0.65\hsize}{
$\begin{array}[b]{c|ccccc}
0 &   0 & 0 & 0 & 0  & 0\\ 
\frac{1}{4} &   \frac{251}{2880} & \frac{323}{1440} &   -\frac{11}{120} & \frac{53}{1440} & -\frac{19}{2880}\\   
 \frac{1}{2}&  \frac{29}{360} & \frac{31}{90} & \frac{1}{15} & \frac{1}{90}& -\frac{1}{360}\\  
\frac 3 4 & \frac{27}{320} & \frac{51}{160} & \frac{9}{40} &\frac{21}{160} & -\frac{3}{320}\\
1 & \frac{7}{90} & \frac{16}{45} & \frac{2}{15} & \frac{16}{45} & \frac{7}{90}\\
\hline
& \frac{7}{90} & \frac{16}{45} & \frac{2}{15} & \frac{16}{45} & \frac{7}{90}\\
\end{array}
$}
& 6 & 6 5 1 \\
\bottomrule
\end{tabular}
\end{table}

\end{document}